\newtheorem{theorem}{Theorem}
\newtheorem{conjecture}[theorem]{Conjecture}
\newtheorem{corollary}[theorem]{Corollary}
\newtheorem{problem}{Problem}
\newtheorem{proposition}[theorem]{Proposition}
\newenvironment{proof}[1][Proof.]{\begin{trivlist}
\item[\hskip \labelsep {\bfseries #1}]}{\end{trivlist}}
\newenvironment{acknowledgement}[1][Acknowledgement]{\begin{trivlist}
\item[\hskip \labelsep {\bfseries #1}]}{\end{trivlist}}
\newcommand{\AmS}{{\protect\the\textfont2
  A\kern-.1667em\lower.5ex\hbox{M}\kern-.125emS}}
\title{Interval non-edge-colorable bipartite graphs and multigraphs}
\author{Petros A. Petrosyan\address[MCSD]{Department of Informatics and Applied Mathematics,\\
Yerevan State University, 0025, Armenia}%
\address{Institute for Informatics and Automation Problems,\\
National Academy of Sciences, 0014, Armenia}%
\thanks{email: pet\_petros@\{ipia.sci.am, ysu.am, yahoo.com\}},
        Hrant H. Khachatrian\addressmark[MCSD]%
\thanks {email: hrant@egern.net}}
\begin{document}

% typeset front matter
\maketitle

\begin{abstract}
An edge-coloring of a graph $G$ with colors $1,\ldots,t$ is called
an interval $t$-coloring if all colors are used, and the colors of
edges incident to any vertex of $G$ are distinct and form an
interval of integers. In 1991 Erd\H{o}s constructed a bipartite
graph with $27$ vertices and maximum degree $13$ which has no
interval coloring. Erd\H{o}s's counterexample is the smallest (in a
sense of maximum degree) known bipartite graph which is not interval
colorable. On the other hand, in 1992 Hansen showed that all
bipartite graphs with maximum degree at most $3$ have an interval
coloring. In this paper we give some methods for constructing of
interval non-edge-colorable bipartite graphs. In particular, by
these methods, we construct three bipartite graphs which have no
interval coloring, contain $20,19,21$ vertices and have maximum
degree $11,12,13$, respectively. This partially answers a question
that arose in [T.R. Jensen, B. Toft, Graph coloring problems, Wiley
Interscience Series in Discrete Mathematics and Optimization, 1995,
p. 204]. We also consider similar problems for bipartite multigraphs.\\

Keywords: edge-coloring, interval coloring, bipartite graph,
bipartite multigraph

\end{abstract}

\section{Introduction}\

In this paper we consider graphs which are finite, undirected, and
have no loops or multiple edges and multigraphs which may contain
multiple edges but no loops. Let $V(G)$ and $E(G)$ denote the sets
of vertices and edges of a multigraph $G$, respectively. For two
distinct vertices $u$ and $v$ of a multigraph $G$, let $E(uv)$
denote the set of all edges of $G$ joining $u$ with $v$, and let
$\mu (uv)$ denote the number of edges joining $u$ with $v$ (i.e.
$\mu (uv)=\vert E(uv)\vert$). The degree of a vertex $v\in V(G)$ is
denoted by $d_{G}(v)$ (or $d(v)$), the maximum degree of $G$ by
$\Delta (G)$, and the edge-chromatic number of $G$ by $\chi^{\prime
}\left( G\right)$. The terms and concepts that we do not define can
be found in \cite{b23}.

Let $G$ be a connected graph and $V(G)=\{v_{1},\ldots,v_{n}\}$,
$n\geq 2$. Let $P(v_{i},v_{j})$ be a simple path joining $v_{i}$ and
$v_{j}$, $VP(v_{i},v_{j})$ and $EP(v_{i},v_{j})$ denote the sets of
vertices and edges of this path, respectively.

A proper edge-coloring of a multigraph $G$ is a coloring of the
edges of $G$ such that no two adjacent edges receive the same color.
If $\alpha $ is a proper edge-coloring of $G$ and $v\in V(G)$, then
$S\left(v,\alpha \right)$ denotes the set of colors of edges
incident to $v$. A proper edge-coloring of a multigraph $G$ with
colors $1,\ldots ,t$ is called an interval $t$-coloring if all
colors are used, and for any vertex $v$ of $G$, the set
$S\left(v,\alpha \right)$ is an interval of integers. A multigraph
$G$ is interval colorable if it has an interval $t$-coloring for
some positive integer $t$. The set of all interval colorable
multigraphs is denoted by $\mathfrak{N}$. For a multigraph $G\in
\mathfrak{N}$, the least value of $t$ for which $G$ has an interval
$t$-coloring is denoted by $w\left(G\right)$.

The concept of interval edge-coloring of multigraphs was introduced
by Asratian and Kamalian \cite{b1,b2}. In \cite{b1,b2} they proved
that if $G$ is interval colorable, then $\chi^{\prime
}\left(G\right)=\Delta(G)$. Moreover, if $G$ is $r$-regular, then
$G$ has an interval coloring if and only if $G$ has a proper
$r$-edge-coloring. This implies that the problem \textquotedblleft
Is a given $r$-regular ($r\geq 3$) graph interval colorable or
not?\textquotedblright is $NP$-complete. Asratian and Kamalian also
proved \cite{b1,b2} that if a triangle-free graph $G$ has an
interval $t$-coloring, then $t\leq \left\vert V(G)\right\vert -1$.
In \cite{b13} Kamalian investigated interval colorings of complete
bipartite graphs and trees. In particular, he proved that the
complete bipartite graph $K_{m,n}$ has an interval $t$-coloring if
and only if $m+n-\gcd(m,n)\leq t\leq m+n-1$, where $\gcd(m,n)$ is
the greatest common divisor of $m$ and $n$. In \cite{b18} Petrosyan
investigated interval colorings of complete graphs and
$n$-dimensional cubes. In particular, he proved that if $n\leq t\leq
\frac{n\left(n+1\right)}{2}$, then the $n$-dimensional cube $Q_{n}$
has an interval $t$-coloring. In \cite{b20} Sevast'janov proved that
it is an $NP$-complete problem to decide whether a bipartite graph
has an interval coloring or not. On the other hand, computer search
in \cite{b5} showed that the following result holds.

\begin{theorem}
\label{mytheorem1} All bipartite graphs of order at most $14$ are
interval colorable.
\end{theorem}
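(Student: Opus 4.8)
The plan is to establish the statement by an exhaustive computational search: generate every bipartite graph on at most $14$ vertices and verify that each one admits an interval coloring. Before running the search I would record three reductions that shrink the problem to a tractable size. First, it suffices to test connected graphs. If $G$ is the disjoint union of $G_{1},\ldots,G_{k}$ and each $G_{i}$ has an interval coloring using an initial segment $1,2,\ldots,t_{i}$ of colors, then the combined coloring uses colors $1,\ldots,\max_{i}t_{i}$, all of which appear, and every vertex still sees the interval it saw inside its own component; hence interval colorability is closed under disjoint union. Second, because a bipartite graph is triangle-free, the Asratian--Kamalian bound gives $w(G)\le |V(G)|-1\le 13$ whenever $G\in\mathfrak{N}$, so for each graph the only candidate palettes are the intervals $[1,t]$ with $\Delta(G)\le t\le |V(G)|-1$, a bounded range. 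Third, several families may be skipped because they are already known to be interval colorable: trees and complete bipartite graphs $K_{m,n}$ (Kamalian), every bipartite graph with $\Delta(G)\le 3$ (Hansen), and every regular bipartite graph, since by König's theorem it has a proper $\Delta$-edge-coloring, which for a regular graph is automatically an interval coloring by the Asratian--Kamalian characterization quoted above. This leaves only the connected, non-regular bipartite graphs with $\Delta(G)\ge 4$.

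Next I would generate the surviving graphs up to isomorphism, using an isomorph-free generator (for example the \texttt{genbg} tool from the \texttt{nauty} package) so that no graph is examined twice and the total count stays manageable. For each generated $G$ I would run a decision procedure searching for an interval coloring. A convenient formulation assigns to every vertex $v$ an unknown base color $f(v)$ and requires the colors at $v$ to be exactly $f(v),f(v)+1,\ldots,f(v)+d(v)-1$; an interval coloring then corresponds to a proper edge-coloring realizing such an interval at each vertex. This is naturally encoded either as a backtracking search over edge colors, with the consecutiveness constraint used aggressively for pruning, or as a Boolean satisfiability instance handed to a solver. Sweeping $t$ over $\Delta(G),\ldots,|V(G)|-1$ and reporting success as soon as any $t$ works completes the test for $G$, and the theorem follows once every graph in the reduced list passes.

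The main obstacle is combinatorial rather than mathematical: the number of bipartite graphs on $14$ vertices is enormous, so the feasibility of the whole argument rests on the reductions above removing the large easy families and forbidding re-examination of isomorphic copies, together with a per-graph test fast enough to clear the remaining instances. I expect the hardest cases to be the dense, highly irregular graphs with $\Delta(G)$ close to $|V(G)|-1$: there the admissible palette range is narrowest and the consecutiveness constraints interact most tightly, so these are precisely the instances on which strong pruning (or a carefully tuned encoding) is essential to keep the search within reach.
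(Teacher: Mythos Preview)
Your proposal is sound and matches the paper's treatment: the paper does not prove Theorem~\ref{mytheorem1} itself but cites it as a computer-search result from Giaro's thesis~\cite{b5}, so an exhaustive verification is exactly the intended argument. The reductions you list (to connected graphs, the palette bound $t\le |V(G)|-1$ via the Asratian--Kamalian inequality for triangle-free graphs, and the exclusion of trees, complete bipartite graphs, regular bipartite graphs, and graphs with $\Delta\le 3$) are all correct and appropriate for making the search tractable.
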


For subcubic bipartite graphs, Hansen proved the following

\begin{theorem}
\label{mytheorem2}\cite{b10}. If $G$ is a bipartite graph with
$\Delta(G)\leq 3$, then $G\in \mathfrak{N}$ and $w(G)\leq 4$.
\end{theorem}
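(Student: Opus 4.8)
The plan is to split the four colors into an \emph{interior} pair $\{2,3\}$ and an \emph{exterior} pair $\{1,4\}$, and to reduce everything to a parity system that bipartiteness renders solvable. I first build a skeleton. By K\"onig's edge-coloring theorem a bipartite graph with $\Delta(G)\le 3$ has a proper $3$-edge-coloring; let $M$ be one of its three color classes. Then $M$ is a matching meeting every vertex of degree $3$ (each such vertex is incident to an edge of every color), and $G'=G-M$ is the union of the remaining two classes, hence a disjoint union of paths and even cycles equipped with a proper $2$-edge-coloring, which I realize using the interior colors $2,3$; the edges of $M$ I will color with the exterior colors $1,4$. The virtue of this split is the interval bookkeeping. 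A degree-$3$ vertex keeps its two $G'$-edges colored $\{2,3\}$, so whichever exterior color its $M$-edge gets, its palette is $\{1,2,3\}$ or $\{2,3,4\}$, always an interval; a degree-$2$ vertex not met by $M$ has both edges in $G'$ and sees $\{2,3\}$; degree-$1$ vertices are trivial. The only genuinely constrained vertices are the degree-$2$ vertices met by $M$: such a vertex has one $G'$-edge and one $M$-edge, and to form an interval the $M$-edge must be $1$ when the $G'$-edge is $2$ and $4$ when it is $3$.

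Next I encode these requirements. On each path component $P$ of $G'$ the alternating $2$-coloring has a single binary degree of freedom, a global flip $x_P\in\mathbb{F}_2$, so the interior bit seen at an endpoint $u$ is $r_u\oplus x_P$ for a fixed reference bit $r_u$. An $M$-edge with exactly one degree-$2$ endpoint places no restriction on the flips, since I can set its exterior color to match that endpoint after the flips are fixed; the real equations come only from $M$-edges $e=uv$ with \emph{both} endpoints of degree $2$, each of which forces its two incident $G'$-edges to carry the same interior bit, i.e. $x_{P(u)}\oplus x_{P(v)}=r_u\oplus r_v$. Choosing the flips is therefore exactly solving this system of XOR-equations over $\mathbb{F}_2$.

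The crux, and the point where the hypothesis is indispensable, is that this system is always consistent. It is solvable iff around every cycle of its constraint graph the right-hand constants XOR to $0$. Such a cycle corresponds to a closed walk in $G$ that traverses the involved path components $P_1,\dots,P_k$ of $G'$ together with the connecting $M$-edges $e_1,\dots,e_k$, of total length $\sum_i\ell_i+k$ with $\ell_i=|E(P_i)|$. Regrouping the reference bits by the path each belongs to, and using that the two end-edges of a path agree in color exactly when its length is odd, the contribution of $P_i$ is $(\ell_i+1)\bmod 2$; summing gives total constant $\equiv\sum_i\ell_i+k\pmod 2$, which is the length of that closed walk. Since $G$ is bipartite every closed walk is even, so every such sum vanishes and the system is consistent; the degenerate case of an $M$-edge joining the two ends of a single path is the instance $k=1$, where bipartiteness forces that path to have odd length. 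I then fix a solution, color $G'$ by the resulting alternating colorings in $2,3$, color each one-sided $M$-edge by its forced exterior color, and color every remaining $M$-edge arbitrarily, say $1$. By the bookkeeping above every vertex sees an interval; the result uses at most four colors, and a routine per-component shift (in a connected graph whose vertices all see intervals, the used colors already form an interval) makes all colors $1,\dots,t$ appear, giving $G\in\mathfrak{N}$ with $w(G)\le 4$.

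The main obstacle is precisely this last consistency step: making the purely local interval demands at the $M$-matched degree-$2$ vertices globally compatible across the graph. Everything else is bookkeeping, but the identity ``constant around a cycle $\equiv$ length of the corresponding closed walk'' is where bipartiteness does the real work, since it is exactly the absence of odd cycles that forces each such constant to be $0$.
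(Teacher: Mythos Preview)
The paper does not contain a proof of this statement: Theorem~\ref{mytheorem2} is quoted from Hansen~\cite{b10} as a known result and is later used as a black box (in the simple-graph case of the induction proving Theorem~\ref{mytheorem3.3}). So there is no ``paper's own proof'' against which to compare your argument.

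That said, your proof is correct and self-contained, and it is worth recording how it differs from the way the paper \emph{uses} the result. You give a direct structural argument: strip off a K\"onig color class $M$ so that $G-M$ is a union of paths and even cycles, color $G-M$ with the interior pair $\{2,3\}$ and $M$ with the exterior pair $\{1,4\}$, and observe that the only nontrivial compatibility conditions come from $M$-edges whose two endpoints have degree~$2$. You then reduce these conditions to an $\mathbb{F}_2$ system on the per-component flips and show consistency by identifying the cycle-sum of the constants with the parity of a closed walk in $G$, which vanishes by bipartiteness. This is a clean global argument that makes explicit where bipartiteness is used. By contrast, the paper never proves the simple-graph case; in the multigraph extension (Theorem~\ref{mytheorem3.3}) it argues by induction on $|E(G)|$, peeling off a multiple edge and invoking Hansen's theorem when no multiple edges remain. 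Your approach could in fact be adapted to yield the multigraph statement directly (a parallel pair just forces a specific flip on its path, and a triple edge is a component handled on its own), whereas the paper's inductive reduction genuinely needs the simple-graph theorem as an external input.

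One small point to make fully explicit in your write-up: each path of $G'$ has at most two constrained endpoints (since $M$ is a matching and only ends of paths can be degree-$2$ vertices met by $M$), so the constraint graph has maximum degree~$2$; this is what guarantees that every cycle in it really does correspond to a closed walk in $G$ traversing each involved path from end to end, which your parity computation tacitly uses.
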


For bipartite graphs with maximum degree $4$, Giaro proved the
following two results:

\begin{theorem}
\label{mytheorem3}\cite{b6}. If $G$ is a bipartite graph with
$\Delta(G)=4$ and without a vertex of degree $3$, then $G\in
\mathfrak{N}$ and $w(G)=4$.
\end{theorem}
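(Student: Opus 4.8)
The plan is to prove the lower and upper bounds on $w(G)$ separately, the lower bound being immediate and the content lying in the upper bound. Since an interval coloring needs at least $\Delta(G)$ colors (a vertex of maximum degree forces that many consecutive colors) and since $\Delta(G)=4$ guarantees a vertex of degree $4$, every interval coloring of $G$ uses at least $4$ colors, so $w(G)\ge 4$. For the reverse inequality I would exhibit an interval $4$-coloring. The key preliminary observation is that, because $G$ is bipartite with $\Delta(G)=4$, K\"onig's theorem ($\chi'(G)=\Delta(G)$ for bipartite graphs) yields a proper $4$-edge-coloring with colors $\{1,2,3,4\}$, and in any such coloring the interval condition is automatic everywhere except possibly at the vertices of degree $2$: a vertex of degree $4$ receives four distinct colors out of four, hence exactly $\{1,2,3,4\}$, an interval; a vertex of degree $1$ receives a single color, also an interval; and since a degree-$4$ vertex exists, all four colors are used. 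Thus the whole problem reduces to producing a proper $4$-edge-coloring in which, at every vertex of degree $2$, the two incident edges receive consecutive colors (one of $\{1,2\},\{2,3\},\{3,4\}$ rather than a gapped pair $\{1,3\},\{1,4\},\{2,4\}$). This is exactly where the hypothesis excluding degree $3$ is used: it leaves only this single mild constraint.

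To build such a coloring I would first reduce to the case where $G$ has no vertex of degree $1$, so that all degrees are $2$ or $4$ and hence even (the degree-$1$ vertices, imposing no interval constraint, are reinserted afterwards). For such a $G$ each component is Eulerian, and because $G$ is bipartite every cycle, and hence every Eulerian circuit, has even length. Traversing an Eulerian circuit and coloring its edges alternately therefore gives a consistent balanced $2$-coloring $p\colon E\to\{0,1\}$ in which every vertex meets equally many edges of each class; in particular each degree-$2$ vertex meets exactly one edge of class $0$ and one of class $1$. Reading class $0$ as the ``odd'' colors $\{1,3\}$ and class $1$ as the ``even'' colors $\{2,4\}$ forces the two edges at any degree-$2$ vertex to have opposite parity, so consecutiveness reduces to a compatible choice of a second, ``level'' function $q\colon E\to\{0,1\}$ (low $=\{1,2\}$ versus high $=\{3,4\}$), via $\mathrm{color}=2q+p+1$. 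Properness at a degree-$4$ vertex then forces $q$ to alternate along the paths and cycles into which each parity class decomposes, while consecutiveness at a degree-$2$ vertex forbids exactly the configuration ``odd edge low, even edge high''.

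The main obstacle is choosing $q$ so as to satisfy this coupled system: the alternation requirement pins $q$ down on each path or cycle up to a single binary phase, and the degree-$2$ constraints then become binary relations linking the phases of the odd-class and even-class components, i.e. a parity ($2$-colorability) problem that must be shown to be always solvable. I expect the bulk of the work to be verifying that these relations are consistent, using bipartiteness and, for cyclic components, the even-length property, together with the accompanying reduction that reinserts the degree-$1$ vertices, whose odd degree breaks the clean Eulerian setup but which can be absorbed as free path-endpoints since they carry no interval condition. Carrying out this analysis produces a proper $4$-edge-coloring in which every degree-$2$ vertex is consecutive, which by the reduction above is an interval $4$-coloring of $G$; hence $w(G)\le 4$, and combined with the lower bound $w(G)=4$.
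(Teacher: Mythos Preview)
The paper does not prove this statement; it is cited from Giaro~\cite{b6} as background. There is thus no in-paper proof to compare against, and I comment on your argument directly.

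Your reduction to ``find a proper $4$-colouring that is consecutive at every degree-$2$ vertex'' is correct, and the Eulerian parity split $p$ is fine (including at degree-$1$ vertices, once one uses open trails rather than literally deleting leaves --- note that your sentence ``reduce to the case where $G$ has no vertex of degree~$1$'' is wrong as written, since deleting a leaf attached to a degree-$4$ vertex creates a degree-$3$ vertex; your later ``free path-endpoints'' remark is the right idea but contradicts the earlier sentence). The real gap is the last step. You describe the constraints on the phases of $q$ as ``a parity ($2$-colorability) problem'', but they are not parity constraints: at a degree-$2$ vertex the forbidden pattern is ``odd edge low \emph{and} even edge high'', which allows three of the four combinations and is the implication $q_{\mathrm{even}}=1\Rightarrow q_{\mathrm{odd}}=1$, a Horn clause. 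The system you must solve is therefore a $2$-SAT instance, not a $2$-colouring, and $2$-SAT is not automatically satisfiable. Moreover, because the sign with which a path's phase variable enters the clause at each endpoint depends on the parity of that path's length, the same variable can occur both positively and negatively, so implication cycles of the dangerous kind are not excluded on formal grounds. You explicitly defer this (``I expect the bulk of the work to be verifying that these relations are consistent''), but that verification \emph{is} the theorem: nothing in your outline explains how bipartiteness forces the instance to be satisfiable, and without such an argument the plan is a reformulation of the problem rather than a proof.
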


\begin{theorem}
\label{mytheorem4}\cite{b6}. The problem of deciding the existence
of interval $\Delta(G)$-coloring of a bipartite graph $G$ can be
solved in polynomial time if $\Delta(G)\leq 4$ and is $NP$-complete if
$\Delta(G)\geq 5$.
\end{theorem}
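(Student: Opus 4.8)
The plan is to prove the two assertions separately: a polynomial-time decision procedure when $\Delta(G)\le 4$, and $NP$-completeness for every fixed $\Delta(G)\ge 5$. Throughout I use that an interval $\Delta$-coloring is exactly a proper $\Delta$-edge-coloring (which exists for bipartite $G$ by K\"onig's theorem) in which every vertex of degree $d$ sees a block of $d$ consecutive colors inside $\{1,\dots,\Delta\}$; in particular every vertex of degree $\Delta$ must see all of $\{1,\dots,\Delta\}$.

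For the tractable side I would first dispose of $\Delta(G)\le 2$ (disjoint paths and even cycles, always interval $\Delta$-colorable) and then exploit the fact that for $\Delta\le 4$ the number of admissible intervals at a vertex is tiny. A degree-$3$ vertex may only see $\{1,2,3\}$ or $\{2,3,4\}$, so it carries a single Boolean choice — whether it omits color $1$ or color $4$ — while a degree-$\le 2$ vertex only adds flexibility. My plan is to take these Boolean choices at the degree-$3$ vertices as variables and to translate the requirements ``the coloring is proper'' and ``each realized color-set is an interval'' into pairwise implications, reducing interval $4$-colorability to a tractable constraint problem (a $2$-satisfiability or $2$-coloring instance); the case $\Delta=3$ is handled identically with even fewer patterns. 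Note that by Theorem~\ref{mytheorem3} a graph with $\Delta=4$ and no vertex of degree $3$ already satisfies $w(G)=4$ and is a yes-instance, so the algorithm really only has to resolve the interaction of the degree-$3$ (and degree-$2$) vertices. The structural reason this collapses at $\Delta=4$ is that the binary slack is minimal; at $\Delta\ge 5$ a degree-$3$ vertex has three choices $\{1,2,3\},\{2,3,4\},\{3,4,5\}$ and a degree-$4$ vertex two, which is exactly the extra freedom the hardness proof will exploit.

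The main obstacle on the tractable side is proving that local consistency is sufficient, i.e.\ that any assignment satisfying all the pairwise constraints extends to an actual proper interval $4$-coloring and that the constraints capture global colorability. I expect this to follow by processing the edges colored with the ``middle'' colors $2,3$, which at every vertex of degree $\ge 3$ form an alternating (path/cycle) structure, and then extending along these alternating chains; verifying that no global obstruction survives the local propagation is the delicate step.

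For $NP$-completeness, membership in $NP$ is immediate since a coloring is a certificate whose properness and interval property are checkable in polynomial time. For hardness I would reduce from a bipartite-friendly, bounded-occurrence variant of $3$-SAT and, for each fixed $\Delta\ge 5$, build bipartite gadgets of maximum degree exactly $\Delta$ whose interval $\Delta$-colorings have precisely two inequivalent ``states'': a variable gadget encoding a truth value, a wire gadget forcing that state to propagate (by making the extremal colors $1$ and $\Delta$ rigid along it), and a clause gadget that is interval $\Delta$-colorable if and only if at least one incident literal is in the satisfying state. To cover all $\Delta\ge 5$ at once I would either parameterize the gadgets directly by $\Delta$, or give a degree-lifting step that turns a max-degree-$\Delta$ instance into a max-degree-$(\Delta+1)$ one by attaching a fixed bipartite gadget that forces the new color $\Delta+1$ to appear and shifts all intervals consistently, so that interval $(\Delta+1)$-colorability of the enlarged graph is equivalent to interval $\Delta$-colorability of the original. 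The hard part throughout is the gadget engineering: checking that each gadget admits exactly the intended interval colorings, that the extra slack present only for $\Delta\ge 5$ is what produces the two states, and that the gadgets compose without introducing spurious colorings or destroying bipartiteness.
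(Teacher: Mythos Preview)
The paper does not prove this theorem at all: Theorem~\ref{mytheorem4} is quoted verbatim from Giaro~\cite{b6} and is used only as background, so there is no ``paper's own proof'' to compare your attempt against. Any genuine proof must be measured against Giaro's original argument, not against anything in this paper.

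That said, what you have written is a \emph{plan}, not a proof, and you explicitly flag the two places where the real work lies without doing either. On the tractable side, the reduction to a $2$-SAT--like system over the Boolean slack at degree-$3$ vertices is the right intuition, but the step you call ``delicate'' --- showing that a satisfying assignment of interval positions actually extends to a proper edge-coloring --- is the entire content of the $\Delta=4$ result, and your outline does not supply the alternating-chain argument you allude to, nor does it handle the interaction with degree-$2$ vertices (which have three interval choices in $\{1,\dots,4\}$, not two). On the hardness side, ``build gadgets with exactly two states and a clause gadget that works iff a literal is true'' is the generic template for every edge-coloring $NP$-completeness proof; the substance is the concrete gadgets, and you have not given any. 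Your degree-lifting idea is also underspecified: attaching a gadget that ``forces the new color $\Delta+1$ to appear'' must do so without creating new interval colorings that bypass the original instance, and you have not argued this.

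In short: the paper offers nothing to compare with, and your proposal correctly identifies the shape of Giaro's two arguments but leaves both nontrivial parts as promissory notes.
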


For bipartite graphs where one of the parts is small, the following
theorem was proved in \cite{b8}.

\begin{theorem}
\label{mytheorem5} If $G$ is a bipartite graph with a bipartition
$(U,V)$ and $\min \{\vert U\vert, \vert V\vert\}\leq 3$, then $G\in
\mathfrak{N}$.
\end{theorem}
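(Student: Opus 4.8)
The plan is to assume without loss of generality that $\vert U\vert\le 3$ and to argue according to the size of $U$, discarding isolated vertices and treating the components of $G$ separately so that we may assume $G$ is connected. If $\vert U\vert=1$ then $G$ is a star $K_{1,n}$, and coloring its edges $1,2,\ldots,n$ is an interval $n$-coloring. If $\vert U\vert=2$, write $U=\{u_1,u_2\}$ and split $V$ into the vertices adjacent to $u_1$ only ($a$ of them), to $u_2$ only ($b$ of them), and to both ($c$ of them). An explicit staircase works: color the $u_1$-edges to the ``$u_1$-only'' vertices $1,\ldots,a$ and to the common vertices $a+1,\ldots,a+c$, give each common vertex the next color on its $u_2$-edge, and finish the ``$u_2$-only'' edges by $a+c+2,\ldots,a+c+b+1$. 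One checks that $S(u_1,\alpha)$ and $S(u_2,\alpha)$ are intervals and every degree-$2$ vertex sees two consecutive colors.

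The substantive case is $\vert U\vert=3$, say $U=\{u_1,u_2,u_3\}$. Here I would first record a simplification: since every edge joins some $u_i$ to $V$, two edges are adjacent only if they share a $u_i$ or share a vertex of $V$. Hence an interval coloring is obtained as soon as we assign colors so that (i) at each $u_i$ the $d(u_i)$ edges receive distinct colors forming an interval $S_i$, and (ii) at each vertex of $V$ of degree $\ge 2$ the incident colors are consecutive; properness is then automatic, and for connected $G$ the union $S_1\cup S_2\cup S_3$ is automatically gap-free, so that after a shift all colors $1,\ldots,t$ are used. Thus the problem becomes a purely combinatorial tiling problem: place tokens of three colors on the integer line so that each color occupies a gap-free interval, each degree-$3$ vertex of $V$ contributes one token of each color at three consecutive positions, and each degree-$2$ vertex of type $\{i,j\}$ contributes a $u_i$-token and a $u_j$-token at two consecutive positions.

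For the construction I would start from the degree-$3$ vertices alone: these together with $U$ induce a copy of $K_{3,N}$ (with $N$ their number), and the classical coloring assigning the $j$-th of them the colors $j,j+1,j+2$ on its edges to $u_1,u_2,u_3$ places the three intervals in a staircase $S_1=[1,N]$, $S_2=[2,N+1]$, $S_3=[3,N+2]$ with a large common overlap. I would then append the remaining vertices at the two ends: the degree-$2$ vertices of the two consecutive types (those $\{i,j\}$ whose intervals are offset by $1$, namely $\{1,2\}$ at the low end and $\{2,3\}$ at the high end) are attached as consecutive pairs that extend $S_1,S_2$ downward and $S_2,S_3$ upward, and finally the degree-$1$ vertices are hung on the extreme ends of the appropriate interval.

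The hard part is the remaining pairwise type, the vertices of type $\{1,3\}$, whose two intervals are offset by $2$ in the staircase. Such a vertex needs a fresh $u_1$-token and a fresh $u_3$-token at consecutive positions, yet new $u_1$-tokens must extend the solid interval $S_1$ and new $u_3$-tokens the solid interval $S_3$, and the offset $2$ between the intervals clashes with the offset $1$ demanded by consecutiveness, so that a naive interleaving destroys the gap-freeness of one of them. I expect this to be the crux. My plan for it is to break the symmetry of the staircase by choosing the arrangement of the three intervals to handle the ``outer'' pair gracefully: either select as outer pair one with no degree-$2$ vertices when such a choice exists, or, when all three pair-types occur, replace the staircase by a \emph{nested} arrangement in which $S_2$ sits inside the common part of $S_1$ and $S_3$, so that $\{1,3\}$-vertices find consecutive slots on either flank of the center while $\{1,2\}$- and $\{2,3\}$-vertices are matched at the two ends of $S_2$. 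Verifying that the token counts always permit such a realization of three simultaneous gap-free intervals is the technical heart of the argument, and I would organize it as a short case analysis on which of $n_{12},n_{13},n_{23}$ vanish.
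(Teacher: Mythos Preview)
The paper does not supply its own proof of this theorem: it is quoted as a result of Giaro, Kubale and Malafiejski \cite{b8}, so there is no in-paper argument to compare against. Your write-up is therefore an independent attempt at the result rather than a re-derivation of something in the text.

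Your treatment of $\vert U\vert\le 2$ is clean and correct; the staircase for $\vert U\vert=2$ is exactly the standard construction. For $\vert U\vert=3$ you have correctly isolated the genuine obstacle, namely the degree-$2$ vertices whose neighbours in $U$ are the two ``outer'' $u_i$'s in whatever linear arrangement one chooses, and you are right that everything else (degree-$3$ vertices via the $K_{3,N}$ staircase, the two ``adjacent'' pair-types at the ends, pendants last) falls out easily. What is not yet a proof is the resolution you sketch for that obstacle. The ``choose the outer pair to be empty'' branch is fine when available, but the ``nested arrangement'' branch is only a hope at this point: placing $S_2$ inside $S_1\cap S_3$ forces inequalities among $d(u_1),d(u_2),d(u_3)$ and among the counts $n_{12},n_{13},n_{23},n_1,n_2,n_3,N$ that you have not verified are always satisfiable, and it changes where the $\{1,2\}$- and $\{2,3\}$-vertices can be absorbed (they can no longer simply dangle off the two ends of $S_2$, since those ends now sit in the interior of $S_1$ and $S_3$). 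The case analysis you promise ``on which of $n_{12},n_{13},n_{23}$ vanish'' is the whole proof in this case, and until it is carried out one cannot tell whether a single nested layout suffices or whether several layouts (and a relabelling of $u_1,u_2,u_3$ to normalise, say, $d(u_2)=\min_i d(u_i)$ or $n_{13}=\min n_{ij}$) are needed. In short: the architecture is sound and matches what one would expect of a direct proof, but the crux you yourself flag is still open in your write-up.
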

Also, it is known that all regular bipartite graphs \cite{b1,b2},
doubly convex bipartite graphs \cite{b3,b14}, grids \cite{b7},
outerplanar bipartite graphs \cite{b9}, $\left(2,b\right)$-biregular
bipartite graphs \cite{b11,b15,b16} and some classes of
$\left(3,4\right)$-biregular bipartite graphs \cite{b4,b19,b24} have
interval colorings. However, there are bipartite graphs which have
no interval colorings. First example of a bipartite graph that is
not interval colorable was obtained by Mirumyan \cite{b17} in 1989,
but it was not published. The graph which was found by Mirumyan has
$19$ vertices and maximum degree $15$. First published example was
given by Sevast'janov \cite{b20} and it has $28$ vertices and
maximum degree $21$ (see Fig. \ref{fig1}). Other examples were
obtained by Erd\H{o}s ($27$ vertices and maximum degree $13$), by
Hertz and de Werra ($21$ vertices and maximum degree $14$), and by
Malafiejski ($19$ vertices and maximum degree $15$). In \cite{b12},
Jensen and Toft posed the following question:

\begin{figure}[h]
\begin{center}
\includegraphics[width=25pc]{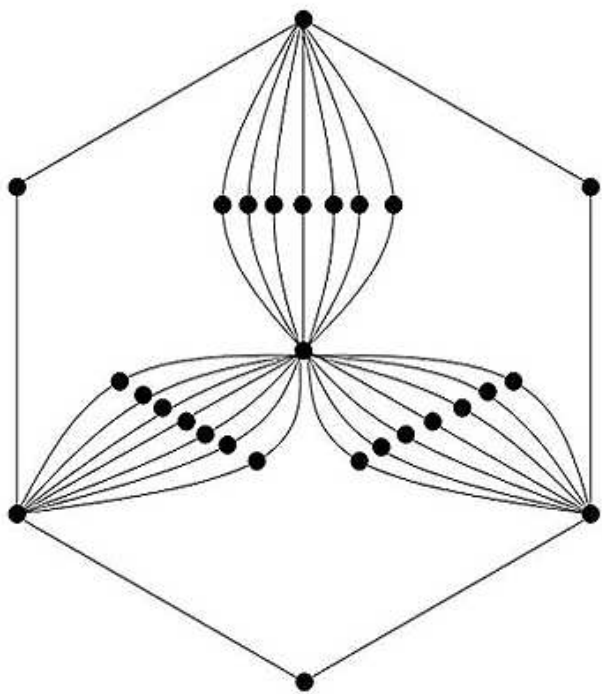}\\
\caption{The Sevast'janov graph.}\label{fig1}
\end{center}
\end{figure}

\begin{problem}
Is there a bipartite graph G with $4\leq \Delta(G)\leq 12$ and
$G\notin \mathfrak{N}$?
\end{problem}

In the present paper we describe some methods for constructing of
interval non-edge-colorable bipartite graphs. In particular, by
these methods, we construct two bipartite graphs $G$ and $H$ with
$\Delta(G)=11$, $\Delta(H)=12$ which have no interval coloring. This
partially answers a question of Jensen and Toft. In this paper we
also consider similar problems for bipartite multigraphs.

\bigskip

\section{Interval non-edge-colorable bipartite graphs}\

\subsection{Counterexamples by fat triangles}\

In 1949 Shannon \cite{b21} proved that $\chi^{\prime }(G)\leq
\left\lfloor \frac{3}{2}\Delta(G)\right\rfloor$ for any multigraph
$G$. Also, he showed that this upper bound is sharp for special
multigraphs which are called fat triangles. The fat triangle is a
multigraph with three vertices $x,y,z$ and $r$ edges between each
pair of vertices, that is, $\mu (xy)=\mu (yz)=\mu (xz)=r$. Later,
Vizing \cite{b22} proved that if a multigraph $G$ has $\chi^{\prime
}(G)=\left\lfloor \frac{3}{2}\Delta(G)\right\rfloor$ and
$\Delta(G)\geq 4$, then $G$ has a fat triangle as a subgraph. In
this paragraph we use fat triangles for constructing of interval
non-edge-colorable bipartite graphs. First note that the graph
obtained by subdividing every edge of a fat triangle is bipartite.
Moreover, a new graph obtained from the subdivided graph by
connecting every inserted vertex to a new vertex is also bipartite.
Now let us define the graph $\Delta_{r,s,t}$ ($1\leq r\leq s\leq t$)
as follows:
\begin{center}
$V(\Delta_{r,s,t})=\{v,x,y,z\}\cup\{a_{1},\ldots,a_{r},b_{1},\ldots,b_{s},c_{1},\ldots,c_{t}\}$,\\
\end{center}
\begin{center}
$E(\Delta_{r,s,t})=\{va_{i},xa_{i},ya_{i}:1\leq i\leq r\}\cup
\{vb_{j},xb_{j},zb_{j}:1\leq j\leq s\}\cup
\{vc_{k},yc_{k},zc_{k}:1\leq k\leq t\}$.
\end{center}

Clearly, $\Delta_{r,s,t}$ is a connected bipartite graph with $\vert
V(\Delta_{r,s,t})\vert=r+s+t+4$, $d(x)=r+s$, $d(y)=r+t$, $d(z)=s+t$,
and $\Delta(\Delta_{r,s,t})=r+s+t$. Note that our $\Delta_{r,s,t}$
graphs generalize Malafiejski's rosettes $M_{k}$ given in \cite{b8},
since $M_{k}=\Delta_{k,k,k}$ for any $k\in \mathbf{N}$.

\begin{theorem}
\label{mytheorem2.1.1} If $r\geq 5$, then $\Delta_{r,s,t}\notin
\mathfrak{N}$.
\end{theorem}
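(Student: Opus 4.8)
The plan is to argue by contradiction: suppose $\alpha$ is an interval $N$-coloring of $G=\Delta_{r,s,t}$ and derive an impossibility from the constraints imposed at the four high-degree vertices $v,x,y,z$. Recall that $G$ is bipartite with parts $\{v,x,y,z\}$ and $\{a_i\}\cup\{b_j\}\cup\{c_k\}$, so every edge is incident to exactly one vertex of degree $3$; hence each triple of edges meeting at some $a_i$ (respectively $b_j$, $c_k$) receives three consecutive colors. It is helpful to view $G$ as three complete bipartite graphs $K_{3,r},K_{3,s},K_{3,t}$ --- one per leg --- glued along the common vertex $v$ and sharing the corners $x,y,z$ pairwise. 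I would first record the lengths of the four color intervals: $S(v,\alpha)$ has length $r+s+t$, while $S(x,\alpha)$, $S(y,\alpha)$, $S(z,\alpha)$ have lengths $r+s$, $r+t$, $s+t$. Since $d(v)=\Delta(G)$, the interval at $v$ is the longest, and I normalize so that $S(v,\alpha)=[\underline v,\overline v]$.

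The first substantive step is to localize all four intervals. Using the consecutive-triple constraint at each $a_i$, the color of $xa_i$ (and of $ya_i$) differs from the color of $va_i\in S(v,\alpha)$ by at most $2$; the same holds along the $b$- and $c$-legs. Consequently each corner interval lies within distance $2$ of $S(v,\alpha)$, so every color used fits into the window $[\underline v-2,\overline v+2]$ of length $r+s+t+4$. I would then sharpen this: the extreme colors $\underline v$ and $\overline v$ each occur on a single edge of $G$, joining $v$ to one degree-$3$ vertex $w^{*}$, and a corner interval can protrude below $\underline v$ (or above $\overline v$) only by ``borrowing'' such a boundary color through a degree-$3$ vertex that it shares with $v$. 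Since $w^{*}$ lies in exactly one of the three legs, and is therefore adjacent to only two of the three corners, the three corner intervals cannot all protrude on the same side; this rigidly constrains the relative positions of $S(x,\alpha)$, $S(y,\alpha)$, $S(z,\alpha)$ inside the window.

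The contradiction should then come from a global count around the triangle $x,y,z$. Writing $k_c=\#\{u\in\{v,x,y,z\}:c\in S(u,\alpha)\}$ for the multiplicity of color $c$, one has $\sum_c k_c=d(v)+d(x)+d(y)+d(z)=3(r+s+t)$, and color $c$ is used on exactly $k_c$ edges. Feeding in the interval lengths of the three corners together with the protrusion bounds from the previous step forces a long central band of colors to be covered by all three corner intervals at once, while the boundary colors are covered with low multiplicity; balancing the two accounts yields a numerical inequality in $r,s,t$. Tracing the consecutive-triples along each leg shows that compatibility of the three corner intervals fails once $r$ is large, and I expect this inequality to break exactly at $r\geq 5$.

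I expect the main obstacle to be precisely this last calibration. Localizing the intervals and bounding the protrusions is essentially forced, but converting ``the three corners must agree on a long common band'' into a clean contradiction requires following, for each of the $r$ triples at the $a_i$ (and symmetrically the $s$ triples at the $b_j$ and the $t$ triples at the $c_k$), which of the three consecutive colors is routed to $v$ and which to the two corners. Pinning the threshold at $r\geq 5$ --- rather than at a weaker bound --- will hinge on a careful case analysis of the few admissible protrusion patterns at the two ends of the window, together with the permutation structure governing how each consecutive triple distributes its middle color among $v$ and its two corners. This is where I would concentrate the detailed work, and it is the step most likely to require an auxiliary lemma isolating the admissible configurations along a single leg.
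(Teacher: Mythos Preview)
Your localisation step is correct and is in fact the whole proof --- but you then abandon it for a global multiplicity count that you yourself admit you cannot close without an unspecified case analysis and auxiliary lemma. As written, the proposal is incomplete: the counting identity $\sum_c k_c=3(r+s+t)$ holds for any $r,s,t$ and does not by itself distinguish $r\ge 5$ from $r\le 4$; all the work is deferred to the ``calibration'' you have not carried out.

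The paper's argument is a one-line path chase that you have essentially already set up. You observed that the extreme colours $\underline v$ and $\overline v$ at $v$ go to two degree-$3$ vertices $u,w\in\{a_i\}\cup\{b_j\}\cup\{c_k\}$. The observation you are missing is that \emph{any} two such vertices have a common neighbour $v'\in\{x,y,z\}$ in $G-v$ (each of $a_i,b_j,c_k$ is adjacent to two of $x,y,z$, and any two $2$-subsets of a $3$-set meet). Hence there is a path $u\,v'\,w$ of length two avoiding $v$. Now propagate along the cycle $v,u,v',w,v$: since $d(u)=3$ one has $\alpha(uv')\le \underline v+2$; since $d(v')\le s+t$ (the largest corner degree, using $r\le s\le t$) one has $\alpha(v'w)\le \underline v+1+s+t$; since $d(w)=3$ one has $\overline v=\alpha(vw)\le \underline v+s+t+3$. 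But $\overline v=\underline v+r+s+t-1$, so $r\le 4$, a contradiction.

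So drop the counting programme entirely: the short path back to $v$ through a single corner already gives the sharp threshold, with no case analysis and no auxiliary lemma.
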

\begin{proof}
Suppose, to the contrary, that the graph $\Delta_{r,s,t}$ has an
interval $q$-coloring $\alpha$ for some $q\geq r+s+t$.

Consider the vertex $v$. Let $u$ and $w$ be two vertices adjacent to
$v$ such that $\alpha(vu)={\min}~S(v,\alpha)=p$ and
$\alpha(vw)={\max}~S(v,\alpha)=p+r+s+t-1$. By the construction of
$\Delta_{r,s,t}$, there is a path $P(u,w)$ in $\Delta_{r,s,t}-v$ of
length two joining $u$ with $w$, where
\begin{center}
$P(u,w)=(u,uv^{\prime},v^{\prime},v^{\prime}w,w)$.
\end{center}

Since $d(u)=3$ and $d(v^{\prime})\leq s+t$, we have

\begin{center}
$\alpha(uv^{\prime})\leq p+d(u)-1=p+2$ and thus
\end{center}
\begin{center}
$\alpha(v^{\prime}w)\leq p+2+ d(v^{\prime})-1=p+1+s+t$.
\end{center}

On the other hand, since $d(w)=3$, we have

\begin{center}
$p+r+s+t-1=\alpha(vw)=\max S(v,\alpha)\leq p+1+s+t+d(w)-1=p+s+t+3$.
\end{center}
Hence, $r\leq 4$, which is a contradiction. ~$\square$
\end{proof}

\begin{figure}[h]
\begin{center}
\includegraphics[width=30pc]{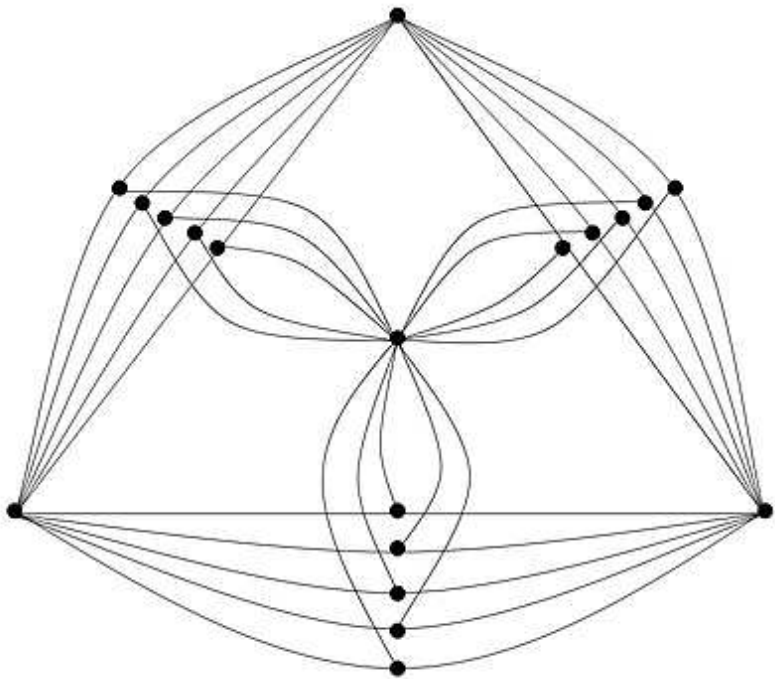}\\
\caption{The graph $\Delta_{5,5,5}$.}\label{fig2}
\end{center}
\end{figure}

Theorem \ref{mytheorem2.1.1} implies that the graph $\Delta_{5,5,5}$
with $\vert V(\Delta_{5,5,5})\vert=19$ and
$\Delta(\Delta_{5,5,5})=15$ shown in Fig. \ref{fig2} has no interval
coloring. In fact, this is the example of an interval
non-edge-colorable bipartite graph that first was constructed by
Mirumyan. This example first appeared in \cite{b3} and \cite{b8},
and currently is known as Malafiejski's rosette $M_{5}$.

\begin{corollary}
\label{mycorollary2.1.1} For any positive integer $\Delta \geq 15$,
there is a connected bipartite graph $G$ with $G\notin \mathfrak{N}$
and $\Delta(G)=\Delta$.
\end{corollary}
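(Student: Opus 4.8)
The plan is to deduce this corollary immediately from Theorem~\ref{mytheorem2.1.1} by an appropriate choice of the parameters $r, s, t$ in the graph family $\Delta_{r,s,t}$. Recall from the construction that $\Delta_{r,s,t}$ is a connected bipartite graph with $\Delta(\Delta_{r,s,t}) = r+s+t$, defined under the constraint $1 \leq r \leq s \leq t$, and that Theorem~\ref{mytheorem2.1.1} guarantees $\Delta_{r,s,t} \notin \mathfrak{N}$ whenever $r \geq 5$. So the entire task reduces to realizing any prescribed value $\Delta \geq 15$ as a sum $r+s+t$ with $5 \leq r \leq s \leq t$.

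The concrete choice I would make is $r = s = 5$ and $t = \Delta - 10$. First I would check the defining inequalities: since $\Delta \geq 15$ we have $t = \Delta - 10 \geq 5 = s = r$, so the chain $1 \leq r \leq s \leq t$ holds and the graph $\Delta_{5,5,\Delta-10}$ is well-defined. Next, the hypothesis of Theorem~\ref{mytheorem2.1.1} is satisfied because $r = 5 \geq 5$, hence $\Delta_{5,5,\Delta-10} \notin \mathfrak{N}$. Finally, the maximum degree computes to $r+s+t = 5 + 5 + (\Delta - 10) = \Delta$, exactly as required, and the graph is connected and bipartite by construction. Taking $G = \Delta_{5,5,\Delta-10}$ then completes the argument.

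Honestly, there is no real obstacle here: the corollary is a one-line packaging of Theorem~\ref{mytheorem2.1.1}, and the only point requiring the smallest bit of care is verifying that the threshold $\Delta \geq 15$ is precisely what makes the ordering constraint $s \leq t$ compatible with $r = 5$. This threshold is not accidental, since $r = s = t = 5$ forces $\Delta = 15$ as the minimal value attainable while keeping $r \geq 5$; any smaller maximum degree cannot be reached within this family, which is consistent with Hansen's and Giaro's positive results (Theorems~\ref{mytheorem2}--\ref{mytheorem3}) showing that low-degree bipartite graphs are interval colorable. One could equally distribute the ``excess'' $\Delta - 15$ among all three parameters, but fixing two of them at $5$ gives the cleanest verification.
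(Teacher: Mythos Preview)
Your argument is correct and is exactly the intended one: the paper states the corollary without proof, as an immediate consequence of Theorem~\ref{mytheorem2.1.1}, and choosing $G=\Delta_{5,5,\Delta-10}$ (so that $r=5\ge 5$ and $r+s+t=\Delta$) is the natural way to unpack that implication.
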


\bigskip

\subsection{Counterexamples by finite projective planes}\

In this paragraph we use finite projective planes for constructing
of interval non-edge-colorable bipartite graphs. A finite projective
plane $\pi(n)$ of order $n$ ($n\geq 2$) has $n^{2}+n+1$ points and
$n^{2}+n+1$ lines, and satisfies the following properties:

\begin{description}
\item[P1] any two points determine a line;

\item[P2] any two lines determine a point;

\item[P3] every point is incident to $n+1$ lines;

\item[P4] every line is incident to $n+1$ points.
\end{description}

Let $\{1,\ldots,n^{2}+n+1\}$ be the set of points and $L$ be the set
of lines of $\pi(n)$. Define the graph
$Erd(r_{1},\ldots,r_{n^{2}+n+1})$ ($r_{1}\geq \ldots\geq
r_{n^{2}+n+1}\geq 1$) as follows:

\begin{center}
$V(Erd(r_{1},\ldots,r_{n^{2}+n+1}))=\{u\}\cup\{1,\ldots,n^{2}+n+1\}$\\
$\cup \left\{v^{(l_{i})}_{1},\ldots,v^{(l_{i})}_{r_{i}}:l_{i}\in L,1\leq i\leq n^{2}+n+1\right\}$,\\
\end{center}
\begin{center}
$E(Erd(r_{1},\ldots,r_{n^{2}+n+1}))=\left\{uv^{(l_{i})}_{1},\ldots,uv^{(l_{i})}_{r_{i}}:l_{i}\in
L, 1\leq i\leq
n^{2}+n+1\right\}\cup$\\
$\bigcup_{i=1}^{n^{2}+n+1}\left\{v^{(l_{i})}_{1}k,\ldots,v^{(l_{i})}_{r_{i}}k:l_{i}\in
L,k\in l_{i},1\leq k\leq n^{2}+n+1\right\}$.
\end{center}

Clearly, $Erd(r_{1},\ldots,r_{n^{2}+n+1})$ is a connected bipartite
graph with $\Delta
(Erd(r_{1},\ldots,r_{n^{2}+n+1}))=\underset{i=1}{\overset{n^{2}+n+1}{\sum
}}r_{i}$ and $\vert
V(Erd(r_{1},\ldots,r_{n^{2}+n+1}))\vert=\underset{i=1}{\overset{n^{2}+n+1}{\sum
}}r_{i}+n^{2}+n+2$. Note that the graph
$Erd(1,1,1,1,1,1,1,1,1,1,1,1,1)$ was described by Erd\H{o}s in 1991
\cite{b12}. This graph has $27$ vertices and maximum degree $13$.

\begin{theorem}
\label{mytheorem2.2.1} If $\underset{i=n+2}{\overset{n^{2}+n+1}{\sum
}}r_{i}> 2(n+1)$, then $Erd(r_{1},\ldots,r_{n^{2}+n+1})\notin
\mathfrak{N}$.
\end{theorem}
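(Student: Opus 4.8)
The plan is to adapt the argument used in Theorem \ref{mytheorem2.1.1} to the projective-plane setting. Suppose for contradiction that $G = Erd(r_{1},\ldots,r_{n^{2}+n+1})$ has an interval $q$-coloring $\alpha$ for some $q \geq \Delta(G) = \sum_{i=1}^{n^{2}+n+1} r_i$. The vertex $u$ has degree $\Delta(G)$, so the colors on edges at $u$ occupy the full interval $[p, p+\Delta(G)-1]$ where $p = \min S(u,\alpha)$. The key structural fact is that every neighbor $v^{(l_i)}_j$ of $u$ has degree exactly $n+2$: one edge to $u$ and $n+1$ edges to the points $k \in l_i$ (by property P4, each line is incident to $n+1$ points). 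So the colors at any such neighbor span at most $n+2$ consecutive integers.

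\textbf{Step 1: Pin the extreme colors.} First I would identify the neighbor $w = v^{(l_i)}_j$ carrying the color $\alpha(uw) = p + \Delta(G) - 1$, the maximum color at $u$. Since $d(w) = n+2$, the colors on the $n+1$ edges from $w$ to points of $l_i$ lie in $[p+\Delta(G)-1-(n+1),\, p+\Delta(G)-1]$, so in particular each point $k \in l_i$ receives, on its edge to $w$, a color at least $p + \Delta(G) - 1 - (n+1)$. Symmetrically, I would look at the neighbor carrying the minimum color $p$ and push a lower bound the other way. The plan is to show that the ``large'' colors forced near the top, combined with the degree constraints at the points, cannot all be accommodated once $\sum_{i=n+2}^{n^2+n+1} r_i$ is large.

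\textbf{Step 2: Count at the points using incidence.} The essential new ingredient compared to Theorem \ref{mytheorem2.1.1} is that a point $k$ lies on exactly $n+1$ lines (property P3), and its degree in $G$ equals $\sum_{l_i \ni k} r_i$. The hypothesis $\sum_{i=n+2}^{n^2+n+1} r_i > 2(n+1)$ is what controls the ``small'' lines (those indexed beyond the first $n+1$). I expect the argument to split the lines through a fixed point, or through $u$'s extreme neighbors, into the $n+1$ ``large-index'' lines $l_1,\dots,l_{n+1}$ and the remaining lines, and to use P1/P2 (any two points lie on a unique common line, any two lines meet in a unique point) to bound how the intervals at the points can interlock. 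The quantity $2(n+1)$ strongly suggests summing a lower and an upper deviation — each of size roughly $n+1$ — from $u$'s color interval and deriving a contradiction when the total color-spread demanded exceeds what the degree $n+2$ at each $v^{(l_i)}_j$ permits.

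\textbf{The main obstacle} will be Step 2: turning the incidence axioms P1--P4 into a clean inequality. In the single-triangle case of Theorem \ref{mytheorem2.1.1} there were only three ``lines'' and a single length-two path to track; here one must control the interaction of up to $n^2+n+1$ lines simultaneously and argue that the colors forced at the points $k \in l_i$ near the extremes of $S(u,\alpha)$ collectively violate the interval condition at some point $k$ whose degree is large because many high-index lines pass through it. I would try to isolate one well-chosen point (or a pair of extreme neighbors of $u$) and show that the $n+1$ lines through it cannot all have their $u$-edges colored consistently with both the interval condition at that point and the spread $\sum_{i=n+2}^{n^2+n+1} r_i > 2(n+1)$; making the bookkeeping of which $r_i$ contributes where is where the real care is needed.
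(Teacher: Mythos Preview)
Your Step 1 is fine, but Step 2 badly overcomplicates matters and misses the one idea that makes the proof go through. The argument is \emph{not} harder than Theorem \ref{mytheorem2.1.1}; it is essentially the same length-two-path trick. Let $w = v_{p}^{(l_{i_0})}$ and $w' = v_{q}^{(l_{j_0})}$ be the neighbors of $u$ carrying the minimum color $s$ and the maximum color $s+\sum_{i} r_i -1$. The crucial observation is that, by property P2 (any two lines meet in a point), the lines $l_{i_0}$ and $l_{j_0}$ have a common point $k_0$, and hence $w$ and $w'$ are both adjacent to $k_0$ in $G-u$. This gives you a single path $w, k_0, w'$ of length two, exactly as in the fat-triangle proof. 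Now chain the interval bounds: $\alpha(k_0 w)\le s+d(w)-1=s+n+1$, then $\alpha(k_0 w')\le s+n+1+d(k_0)-1$, then $\alpha(uw')\le s+n+d(k_0)+d(w')-1=s+2n+1+d(k_0)$.

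The second ingredient you are missing is how the hypothesis enters. Since $k_0$ lies on exactly $n+1$ lines (property P3), its degree is the sum of $n+1$ of the $r_i$; because the $r_i$ are sorted decreasingly, $d(k_0)\le \sum_{i=1}^{n+1} r_i$. Plugging this in and comparing with $\alpha(uw')=s+\sum_{i=1}^{n^2+n+1} r_i-1$ yields $\sum_{i=n+2}^{n^2+n+1} r_i\le 2(n+1)$, the desired contradiction. There is no need to track many lines simultaneously, no bookkeeping of which $r_i$ go where beyond this single degree bound, and no ``interlocking'' argument at multiple points; your anticipated main obstacle does not exist.
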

\begin{proof}
Suppose, to the contrary, that the graph
$Erd(r_{1},\ldots,r_{n^{2}+n+1})$ has an interval $t$-coloring
$\alpha$ for some $t\geq \underset{i=1}{\overset{n^{2}+n+1}{\sum
}}r_{i}$.

Consider the vertex $u$. Let $v_{p}^{(l_{i_{0}})}$ and
$v_{q}^{(l_{j_{0}})}$ be two vertices adjacent to $u$ such that
$\alpha\left(uv_{p}^{(l_{i_{0}})}\right)={\min}~S(u,\alpha)=s$ and
$\alpha\left(uv_{q}^{(l_{j_{0}})}\right)={\max}~S(u,\alpha)=s+\underset{i=1}{\overset{n^{2}+n+1}{\sum
}}r_{i}-1$.

If $l_{i_{0}}=l_{j_{0}}$, then, by the construction of
$Erd(r_{1},\ldots,r_{n^{2}+n+1})$, there exists $k_{0}$ such that
$k_{0}v_{p}^{(l_{i_{0}})}$, $k_{0}v_{q}^{(l_{j_{0}})}\in
E(Erd(r_{1},\ldots,r_{n^{2}+n+1}))$. If $l_{i_{0}}\neq l_{j_{0}}$,
then, by the construction of $Erd(r_{1},\ldots,r_{n^{2}+n+1})$ and
the property P2, there exists $k_{0}$ such that
$k_{0}v_{p}^{(l_{i_{0}})}$, $k_{0}v_{q}^{(l_{j_{0}})}\in
E(Erd(r_{1},\ldots,r_{n^{2}+n+1}))$.

By the construction of $Erd(r_{1},\ldots,r_{n^{2}+n+1})$ and
properties P3 and P4, we have
$d\left(v_{p}^{(l_{i_{0}})}\right)=d\left(v_{q}^{(l_{j_{0}})}\right)=n+2$
and
\begin{center}
$\alpha\left(k_{0}v_{p}^{(l_{i_{0}})}\right)\leq
s+d\left(v_{p}^{(l_{i_{0}})}\right)-1=s+n+1$ and thus
\end{center}
\begin{center}
$\alpha\left(k_{0}v_{q}^{(l_{j_{0}})}\right)\leq
s+n+1+d(k_{0})-1\leq s+n+\underset{i=1}{\overset{n+1}{\sum }}r_{i}$.
\end{center}

This implies that
\begin{center}
$s+\underset{i=1}{\overset{n^{2}+n+1}{\sum
}}r_{i}-1=\alpha\left(uv_{q}^{(l_{j_{0}})}\right)={\max}~S(u,\alpha)\leq
s+n+\underset{i=1}{\overset{n+1}{\sum
}}r_{i}+d\left(v_{q}^{(l_{j_{0}})}\right)-1=s+2n+1+\underset{i=1}{\overset{n+1}{\sum
}}r_{i}$.
\end{center}

Hence, $\underset{i=n+2}{\overset{n^{2}+n+1}{\sum }}r_{i}\leq
2(n+1)$, which is a contradiction.
 ~$\square$
\end{proof}

\begin{corollary}
\label{mycorollary2.2.1} For any positive integer $\Delta \geq 13$,
there is a connected bipartite graph $G$ with $G\notin \mathfrak{N}$
and $\Delta(G)=\Delta$.
\end{corollary}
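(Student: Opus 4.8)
The plan is to realize every target maximum degree $\Delta\geq 13$ inside a single family of $Erd$-graphs built from one fixed projective plane, and then read off the conclusion directly from Theorem \ref{mytheorem2.2.1}. Since $3$ is a prime power, a projective plane $\pi(3)$ of order $n=3$ exists, giving $n^{2}+n+1=13$ points and $13$ lines, each incident to $n+1=4$ of the other kind. For this value of $n$ the hypothesis of Theorem \ref{mytheorem2.2.1} reads $\sum_{i=5}^{13} r_{i} > 2(n+1)=8$, i.e. $\sum_{i=5}^{13} r_{i}\geq 9$, while the maximum degree of the resulting graph equals $\sum_{i=1}^{13} r_{i}$ (it is attained at $u$).

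The key observation is that the non-colorability condition constrains only the nine smallest entries $r_{5},\ldots,r_{13}$, whereas the maximum degree can be inflated freely through $r_{1}$. Concretely, given $\Delta\geq 13$ I would choose the nonincreasing sequence $r_{1}=\Delta-12$ and $r_{2}=\cdots=r_{13}=1$. This is admissible because $\Delta-12\geq 1=r_{2}$ whenever $\Delta\geq 13$, so the required ordering $r_{1}\geq r_{2}\geq\cdots\geq r_{13}\geq 1$ holds. With this choice $\sum_{i=5}^{13} r_{i}=9>8$, so Theorem \ref{mytheorem2.2.1} applies and $Erd(r_{1},\ldots,r_{13})\notin\mathfrak{N}$; moreover $\Delta\bigl(Erd(r_{1},\ldots,r_{13})\bigr)=\sum_{i=1}^{13} r_{i}=(\Delta-12)+12=\Delta$. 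Connectedness and bipartiteness are already guaranteed by the construction preceding Theorem \ref{mytheorem2.2.1}, so $G=Erd(\Delta-12,1,\ldots,1)$ is the desired graph.

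There is essentially no obstacle here beyond bookkeeping: the only external facts needed are the existence of $\pi(3)$ (immediate, since $3$ is prime) and the correct reading of the index range $i=n+2,\ldots,n^{2}+n+1$ at $n=3$. The one point worth checking with care is the admissibility of the degree sequence, namely that boosting $r_{1}$ never violates the monotonicity assumption built into the definition of $Erd(r_{1},\ldots,r_{n^{2}+n+1})$; this is precisely why the construction keeps all remaining $r_{i}$ equal to $1$ and increases only the largest entry. Note also that the base case $\Delta=13$ recovers Erd\H{o}s's graph $Erd(1,1,1,1,1,1,1,1,1,1,1,1,1)$, so the corollary is exactly a quantitative strengthening of that example across all larger maximum degrees.
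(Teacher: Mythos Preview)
Your proof is correct and follows exactly the route the paper intends: the corollary is stated without proof immediately after Theorem \ref{mytheorem2.2.1}, and your argument simply instantiates that theorem with a suitable sequence $(r_1,\ldots,r_{13})$ over $\pi(3)$, recovering Erd\H{o}s's graph at $\Delta=13$ and inflating $r_1$ thereafter. The only cosmetic variant is that the paper's explicit post-corollary examples use $\pi(2)$ (e.g.\ $Erd(2,2,2,2,2,2,1)$), but your choice of $\pi(3)$ is equally valid and arguably cleaner since it lets you keep $r_2=\cdots=r_{13}=1$.
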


\begin{figure}[h]
\begin{center}
\includegraphics[width=30pc]{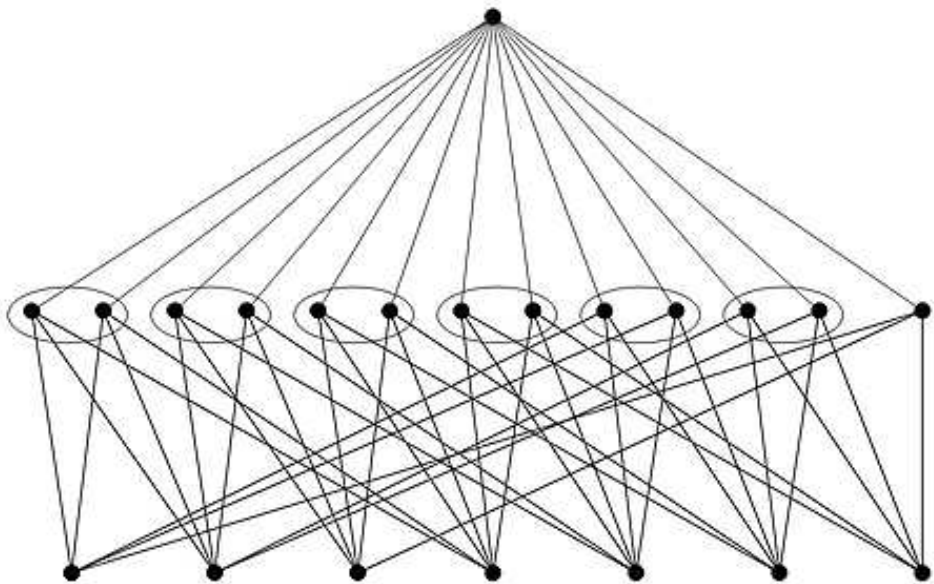}\\
\caption{The graph $Erd(2,2,2,2,2,2,1)$.}\label{fig3}
\end{center}
\end{figure}

Theorem \ref{mytheorem2.2.1} implies that the graph
$Erd(2,2,2,2,2,2,1)$ with $\vert V(Erd(2,2,2,2,2,2,1))\vert = 21$
and $\Delta(Erd(2,2,2,2,2,2,1))= 13$ shown in Fig. \ref{fig3} has no
interval coloring. Also, Theorem \ref{mytheorem2.2.1} implies that
the graph $Erd(2,2,2,2,2,2,2)$ with $\vert
V(Erd(2,2,2,2,2,2,2))\vert =22$ and $\Delta(Erd(2,2,2,2,2,2,1)) =14$
has no interval coloring. In the next section we show that there is
a connected bipartite graph $G$ with $\vert V(G)\vert =21$ and
$\Delta(G)=14$ which is not interval colorable.

\bigskip

\subsection{Counterexamples by trees}\

Let $T$ be a tree and $V(T)=\{v_{1},\ldots,v_{n}\}$, $n\geq 2$. For
a simple path $P(v_{i},v_{j})$, define $L(v_{i},v_{j})$ as follows:
\begin{center}
$L(v_{i},v_{j})=\vert EP(v_{i},v_{j})\vert +\vert\left\{uw:uw\in
E(T), u\in VP(v_{i},v_{j}), w\notin VP(v_{i},v_{j})\right\}\vert$.
\end{center}

Define:
\begin{center}
$M(T)={\max }_{1\leq i\leq n, 1\leq j\leq n}L(v_{i},v_{j})$.
\end{center}

In \cite{b14}, Kamalian proved the following result.

\begin{theorem}
\label{mytheorem2.3.1} If $T$ is a tree, then $T$ has an interval
$t$-coloring if and only if $\Delta(T)\leq t\leq M(T)$.
\end{theorem}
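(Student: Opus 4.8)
The plan is to prove both directions of the equivalence, where the necessity of $\Delta(T)\le t$ is immediate (any proper edge-coloring uses at least $\Delta(T)$ colors, and interval colorings are in particular proper) and the necessity of $t\le M(T)$ together with the sufficiency require the combinatorial meaning of the quantity $L(v_i,v_j)$. The key observation is that $L(v_i,v_j)$ counts the edges of the path $P(v_i,v_j)$ plus all edges hanging off the interior and endpoints of that path; in an interval coloring these edges interact in a forced way, so $L$ measures how many distinct colors such a configuration can stretch across.

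For the upper bound $t\le M(T)$, I would argue by contradiction. Suppose $\alpha$ is an interval $t$-coloring with $t>M(T)$. Since all $t$ colors are used and $T$ is connected, I would locate an edge colored $1$ and an edge colored $t$ and show, by walking along the tree and using at each vertex $v$ that the colors at $v$ form an interval of length $d(v)$, that the colors can increase by at most $d(v)-1$ as we pass through $v$. Tracking the color increments along the unique path $P(v_i,v_j)$ joining the two extreme edges and accounting for the edges incident to the path, the total color-span is bounded by exactly $L(v_i,v_j)\le M(T)$. This mirrors the local increment estimates already used in the proofs of Theorem~\ref{mytheorem2.1.1} and Theorem~\ref{mytheorem2.2.1}, where passing through a vertex of degree $d$ raises an attainable color by at most $d-1$; the same bookkeeping, summed along a path rather than across three edges, yields the bound here.

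For the sufficiency, given any $t$ with $\Delta(T)\le t\le M(T)$, I would construct an explicit interval $t$-coloring. The natural approach is induction on $|V(T)|$ or on $t$: first produce a canonical interval coloring (for instance, root the tree and color greedily so that each vertex $v$ receives the interval of colors forced by its parent edge and its degree, which always succeeds on a tree since there are no cycles to create conflicts), and then show that the set of achievable values of $t$ is exactly the integer interval $[\Delta(T),M(T)]$. The lower endpoint $\Delta(T)$ is realized by a tightest coloring; to climb from one value of $t$ to $t+1$ I would exhibit a local recoloring move that shifts colors along a suitable path so as to introduce one additional color while preserving the interval property at every vertex, continuing until the spread reaches the path that realizes $M(T)$.

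The main obstacle I expect is the sufficiency direction, specifically verifying that every intermediate value $t\in[\Delta(T),M(T)]$ is attainable rather than just the two endpoints. Showing $t=\Delta(T)$ and $t=M(T)$ are each achievable is comparatively routine via a direct greedy/rooted construction, but proving there are no ``gaps'' requires a careful shifting argument that monotonically increases the number of colors by exactly one at each step while maintaining intervals at all vertices. Identifying the right path to stretch and checking that the shift never breaks the interval condition at a branching vertex is the delicate part; the tree structure (unique paths, no cycles) is what makes such a controlled stretching possible and is the feature I would lean on throughout.
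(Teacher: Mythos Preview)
The paper does not contain a proof of this theorem; it is quoted as a result of Kamalian with a citation to his doctoral thesis \cite{b14}, and is then used as a tool in the subsequent construction of $\widetilde{T}$. So there is no in-paper proof to compare your proposal against.

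That said, your outline is essentially the standard route and is sound. For the bound $t\le M(T)$ your path-increment bookkeeping is exactly right; it is worth noting that for a path $P(v_i,v_j)$ with vertices $v_1,\ldots,v_{k+1}$ one has
\[
L(v_i,v_j)=1+\sum_{j=1}^{k+1}\bigl(d_T(v_j)-1\bigr),
\]
which is precisely the quantity your telescoping estimate produces, so the upper bound drops out cleanly once you pick $v_i$ incident to an edge of color~$1$ and $v_j$ incident to an edge of color~$t$. For sufficiency, your diagnosis is accurate: producing an interval $\Delta(T)$-coloring (root and greedily assign) and an interval $M(T)$-coloring (stretch along a path realizing $M(T)$) are each routine, and the real content is the ``no gaps'' step. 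One clean way to handle it, which you may find tidier than a one-step shifting move, is to prove directly by induction on $|V(T)|$ that every $t\in[\Delta(T),M(T)]$ is realized: delete a leaf $\ell$ with neighbor $p$, invoke the induction hypothesis on $T-\ell$ for a suitable $t'\in\{t-1,t\}$ (checking that $t'$ lies in $[\Delta(T-\ell),M(T-\ell)]$), and then reinsert the edge $\ell p$ with the unique color that extends the interval at $p$. The tree structure guarantees this extension never conflicts, which is exactly the feature you identified as decisive.
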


Now let $T$ be a tree in which the distance between any two pendant
vertices is even and $F(T)=\{v:v\in V(T)\wedge d_{T}(v)=1\}$.

Let us define the graph $\widetilde{T}$ as follows:
\begin{center}
$V(\widetilde{T})=V(T)\cup \{u\}$, $u\notin V(T)$,
$E(\widetilde{T})=E(T)\cup \{uv:v\in F(T)\}$.
\end{center}

Clearly, $\widetilde{T}$ is a connected bipartite graph with
$\Delta(\widetilde{T})=\vert F(T)\vert$.

\begin{theorem}
\label{mytheorem2.3.2} If $T$ is a tree in which the distance
between any two pendant vertices is even and $\vert F(T)\vert >
M(T)+2$, then $\widetilde{T}\notin \mathfrak{N}$.
\end{theorem}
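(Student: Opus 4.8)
The plan is to follow the same template as in the proofs of Theorems \ref{mytheorem2.1.1} and \ref{mytheorem2.2.1}: assume an interval coloring exists, focus on the unique vertex of maximum degree, and trace the extremal colors along a path until the interval condition forces a contradiction. Concretely, suppose for contradiction that $\widetilde{T}$ admits an interval $q$-coloring $\alpha$; since $\Delta(\widetilde{T})=\vert F(T)\vert$ we have $q\geq \vert F(T)\vert$. Consider the added vertex $u$, whose incident colors form the interval $S(u,\alpha)=\{s,s+1,\ldots,s+\vert F(T)\vert-1\}$. I would pick two leaves $v_a,v_b\in F(T)$ with $\alpha(uv_a)=s$ and $\alpha(uv_b)=s+\vert F(T)\vert-1$; since $\vert F(T)\vert\geq 2$ these are distinct, and (by the even-distance hypothesis, which is precisely what makes $\widetilde{T}$ bipartite) the unique path $P(v_a,v_b)$ in $T$ has even length $m\geq 2$.

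Next I would propagate a color bound along $P(v_a,v_b)=(w_0,f_1,w_1,\ldots,f_m,w_m)$ with $w_0=v_a$, $w_m=v_b$. Each endpoint $v_a,v_b$ has degree $2$ in $\widetilde{T}$, while each internal $w_k$ has degree $d_T(w_k)$. The interval property at a vertex of degree $d$ bounds the difference of any two incident colors by $d-1$, so I obtain $\alpha(f_1)\leq s+1$ at $v_a$, the telescoping chain $\alpha(f_{k+1})\leq\alpha(f_k)+(d_T(w_k)-1)$ across the internal vertices, and finally $\alpha(uv_b)\leq \alpha(f_m)+1$ at $v_b$. Summing these yields
\[
s+\vert F(T)\vert-1=\alpha(uv_b)\leq s+2+\sum_{k=1}^{m-1}\bigl(d_T(w_k)-1\bigr).
\]

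The crux, and the step I expect to be the main obstacle, is the bookkeeping that identifies the telescoped degree sum with $L(v_a,v_b)$. I would count the hanging edges of $P(v_a,v_b)$ vertex by vertex: each internal $w_k$ contributes $d_T(w_k)-2$ such edges while the two leaf endpoints contribute none, so $L(v_a,v_b)=m+\sum_{k=1}^{m-1}(d_T(w_k)-2)$. Rearranging this gives the clean identity $\sum_{k=1}^{m-1}(d_T(w_k)-1)=L(v_a,v_b)-1$, independent of $m$. Substituting into the displayed inequality collapses it to $\vert F(T)\vert\leq L(v_a,v_b)+2\leq M(T)+2$, contradicting the hypothesis $\vert F(T)\vert>M(T)+2$. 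Care is needed to treat the two path endpoints correctly (they are leaves of $T$ but have degree $2$ in $\widetilde{T}$) and to verify that the edge count matches the definition of $L$ exactly; everything else is routine.
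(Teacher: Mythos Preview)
Your proposal is correct and follows essentially the same argument as the paper: assume an interval coloring, look at the extremal colors at $u$, propagate a bound along the unique tree path between the two corresponding leaves, and identify the telescoped degree sum with $L(v_a,v_b)$ to obtain $\vert F(T)\vert\leq M(T)+2$. The only difference is cosmetic---the paper writes the bound as $\alpha(v_iv_{i+1})\leq s+1+\sum_{j=1}^{i}(d_T(v_j)-1)$ (absorbing the leaf endpoint into the sum since $d_T(v_1)-1=0$) and asserts the identity $1+\sum_{j=1}^{k}(d_T(v_j)-1)=L(v,v')$ without the explicit bookkeeping you supply.
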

\begin{proof}
Suppose, to the contrary, that $\widetilde{T}$ has an interval
$t$-coloring $\alpha$ for some $t\geq \vert F(T)\vert$.

Consider the vertex $u$. Let $v$ and $v^{\prime}$ be two vertices
adjacent to $u$ such that $\alpha(uv)={\min}~S(u,\alpha)=s$ and
$\alpha(uv^{\prime})={\max}~S(u,\alpha)=s+\vert F(T)\vert-1$. Since
$\widetilde{T}-u$ is a tree, there is a unique path
$P(v,v^{\prime})$ in $\widetilde{T}-u$ joining $v$ with
$v^{\prime}$, where
\begin{center}
$P(v,v^{\prime})=(v_{1},e_{1},v_{2},\ldots,v_{i},e_{i},v_{i+1},\ldots,v_{k},e_{k},v_{k+1})$,
$v_{1}=v$, $v_{k+1}=v^{\prime}$.
\end{center}

Note that
\begin{center}
$\alpha(v_{i}v_{i+1})\leq s+1+\underset{j=1}{\overset{i}{\sum
}}(d_{T}(v_{j})-1)$ for $1\leq i\leq k$.
\end{center}

From this, we have
\begin{center}
$\alpha(v_{k}v_{k+1})=\alpha(v_{k}v^{\prime})\leq
s+1+\underset{j=1}{\overset{k}{\sum
}}(d_{T}(v_{j})-1)=s+L(v,v^{\prime})\leq s+M(T)$.
\end{center}

Hence
\begin{center}
$s+\vert F(T)\vert -1={\max}~S(u,\alpha)=\alpha(uv^{\prime})\leq
s+1+M(T)$ and thus $\vert F(T)\vert\leq M(T)+2$,
\end{center}
which is a contradiction.
 ~$\square$
\end{proof}

Now let us consider the tree $T$ shown in Fig. \ref{fig4}.

\begin{figure}[h]
\begin{center}
\includegraphics[width=20pc]{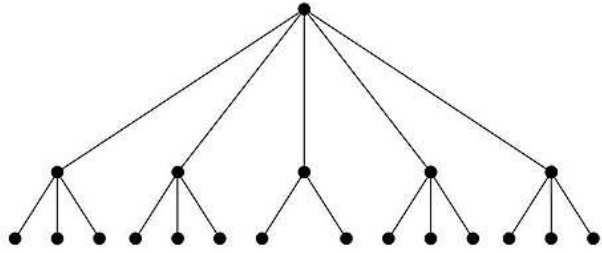}\\
\caption{The tree $T$.}\label{fig4}
\end{center}
\end{figure}

Since $M(T)=11$ and $\vert F(T)\vert=14$, the graph $\widetilde{T}$
with $\vert V(\widetilde{T})\vert=21$ and $\Delta(\widetilde{T})=14$
has no interval coloring. Our constructions by trees generalize
Hertz's graphs $H_{p,q}$ given in \cite{b8}. Moreover, the
aforementioned example obtained by the method described above is
smaller than the smallest Hertz's graph $H_{7,2}$.

\bigskip

\subsection{Counterexamples by subdivisions}\

In this section we also need a definition of the interval of
positive integers. For positive integers $a$ and $b$, we denote by
$\left[a,b\right]$, the set of all positive integers $c$ with $a\leq
c\leq b$.

Let $G$ be a graph and $V(G)=\{v_{1},\ldots,v_{n}\}$. Define graphs
$S(G)$ and $\widehat{G}$ as follows:
\begin{center}
$V(S(G))=\{v_{1},\ldots,v_{n}\}\cup \{w_{ij}:v_{i}v_{j}\in E(G)\}$,
\end{center}
\begin{center}
$E(S(G))=\{v_{i}w_{ij},v_{j}w_{ij}:v_{i}v_{j}\in E(G)\}$,
\end{center}
\begin{center}
$V(\widehat{G})=V(S(G))\cup \{u\}$, $u\notin V(S(G))$,
$E(\widehat{G})=E(S(G))\cup \{uw_{ij}:v_{i}v_{j}\in E(G)\}$.
\end{center}

In other words, $S(G)$ is the graph obtained by subdividing every
edge of $G$, and $\widehat{G}$ is the graph obtained from $S(G)$ by
connecting every inserted vertex to a new vertex $u$. Clearly,
$S(G)$ and $\widehat{G}$ are bipartite graphs.

\begin{proposition}
\label{myproposition2.4.1} If $G$ is a bipartite graph and $G\in
\mathfrak{N}$, then $S(G)\in \mathfrak{N}$.
\end{proposition}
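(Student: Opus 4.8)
The plan is to build an interval coloring of $S(G)$ directly from one of $G$, exploiting the fact that every inserted vertex $w_{ij}$ has degree exactly $2$ in $S(G)$ while every original vertex $v_i$ keeps its $G$-degree. So first I would fix an interval $t$-coloring $\alpha$ of $G$ together with a bipartition $(X,Y)$ of its vertex set. For the inserted vertices the interval condition is cheap, since any two consecutive integers placed on the two edges $v_iw_{ij}$ and $v_jw_{ij}$ form the interval $[\alpha(v_iv_j),\alpha(v_iv_j)+1]$; the real content is to keep the color set at each original vertex an interval.

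The first idea one might try, namely doubling each color by sending $\alpha(v_iv_j)$ to the pair $2\alpha(v_iv_j)-1,\,2\alpha(v_iv_j)$, fails: it keeps the inserted vertices happy but at an original vertex the received colors become a set of the form $\{2c-1 : c\in[a,b]\}$, a collection of odd numbers with gaps, which is not an interval. The repair I would use is to break the symmetry of each edge via the bipartition: for an edge $v_iv_j$ with $v_i\in X$ and $v_j\in Y$, color the edge on the $X$-side with the original color and the edge on the $Y$-side with that color increased by one. Concretely I would define a coloring $\beta$ of $S(G)$ by $\beta(v_iw_{ij})=\alpha(v_iv_j)$ when $v_i\in X$ and $\beta(v_jw_{ij})=\alpha(v_iv_j)+1$ when $v_j\in Y$.

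Then I would run three verifications. Properness is immediate: at $w_{ij}$ the two colors differ by $1$; at a vertex of $X$ the colors assigned are exactly the values $\alpha$ gives there, distinct since $\alpha$ is proper; at a vertex of $Y$ they are those same values shifted by $+1$, again distinct. For the interval condition, at $v_i\in X$ the set $S(v_i,\beta)$ equals $S(v_i,\alpha)$, already an interval, while at $v_j\in Y$ it equals $S(v_j,\alpha)$ shifted by $+1$, still an interval. The one place the hypothesis is used is here: because $G$ is bipartite, each original vertex lies entirely in $X$ or entirely in $Y$, so the ``$+0$ versus $+1$'' rule is applied uniformly around every vertex; this global consistency is precisely what a proper $2$-coloring of $V(G)$ supplies.

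Finally I would confirm that all colors are used. The $X$-side edges realize every color in $[1,t]$ and the $Y$-side edges realize every color in $[2,t+1]$, since $\alpha$, being an interval $t$-coloring, uses all of $1,\ldots,t$; hence $\beta$ is an interval $(t+1)$-coloring of $S(G)$ and $S(G)\in\mathfrak{N}$. I do not expect a genuine obstacle: the only subtle point is recognizing that the naive doubling is wrong and that a one-sided shift, made globally consistent by the bipartition, fixes it, after which every remaining step is a routine check.
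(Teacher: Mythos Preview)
Your proposal is correct and follows essentially the same construction as the paper: fix a bipartition, keep the original color on the edge toward one side and add $1$ on the edge toward the other side, yielding an interval $(t+1)$-coloring of $S(G)$. Your write-up is in fact more detailed than the paper's, which simply defines $\beta(u_iw_{ij})=\alpha(u_iv_j)$, $\beta(v_jw_{ij})=\alpha(u_iv_j)+1$ and declares the verification easy.
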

\begin{proof}
Let $G$ be a bipartite graph with a bipartition $(U,V)$, where
$U=\{u_{1},\ldots,u_{r}\}$, $V=\{v_{1},\ldots,v_{s}\}$. Also, let
$\alpha$ be an interval $t$-coloring of the graph $G$.

Define an edge-coloring $\beta$ of the graph $S(G)$ as follows:
\begin{center}
$\beta(u_{i}w_{ij})=\alpha(u_{i}v_{j})$ and
$\beta(v_{j}w_{ij})=\alpha(u_{i}v_{j})+1$ for every $u_{i}v_{j}\in
E(G)$.
\end{center}

It is easy to see that $\beta$ is an interval $(t+1)$-coloring of
the graph $S(G)$.
 ~$\square$
\end{proof}

In \cite{b11,b15,b16}, it was proved that if $G$ is a regular graph,
then $S(G)\in \mathfrak{N}$. It would be interesting to generalize
the last two statements to general graphs. In other words, we would
like to suggest the following

\begin{conjecture}
If $G$ is a simple graph and $G\in \mathfrak{N}$, then $S(G)\in
\mathfrak{N}$.
\end{conjecture}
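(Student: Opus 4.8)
The plan is to reduce the problem to an orientation-and-offset question on $G$ and then to attack the residual difficulty caused by odd cycles.

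First I would reformulate what an interval coloring of $S(G)$ actually is. Every subdivision vertex $w_{ij}$ has degree $2$, so its two incident edges $v_{i}w_{ij}$ and $v_{j}w_{ij}$ must receive a consecutive pair of colors $\{m_{ij},m_{ij}+1\}$. Thus an interval coloring $\beta$ of $S(G)$ amounts to choosing, for each edge $v_{i}v_{j}$ of $G$, an integer base $m_{ij}$ together with an orientation deciding which endpoint is assigned $m_{ij}$ and which is assigned $m_{ij}+1$, subject to the requirement that at every original vertex $v$ the assigned half-edge colors form an interval of length $d_{G}(v)$. The given interval $t$-coloring $\alpha$ of $G$ is the data I want to exploit when making these choices.

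Next I would isolate the exact point where the bipartite argument of Proposition~\ref{myproposition2.4.1} breaks. The most natural construction keeps the base $m_{ij}=\alpha(v_{i}v_{j})$ and adds an offset in $\{0,1\}$ to each half-edge. A short threshold analysis shows that, for the colors at a vertex $v$ to remain a single interval, either all half-edges at $v$ must take offset $0$ or all must take offset $1$; the consecutive-pair condition at every subdivision vertex then forces exactly one endpoint of each edge to be an ``offset $1$'' vertex, i.e.\ it forces a proper $2$-coloring of $V(G)$. Equivalently, setting the color of $v_{i}w_{ij}$ to $\alpha(v_{i}v_{j})+h(v_{i})$ keeps every vertex palette an interval for any vertex-shift $h$, while the subdivision condition becomes $|h(v_{i})-h(v_{j})|=1$ on every edge, which is solvable precisely when $G$ is bipartite. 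Hence for non-bipartite $G$ the bases $m_{ij}$ must genuinely differ from $\alpha(v_{i}v_{j})$, and every construction in which the half-edge colors at each vertex are produced by a single global shift is doomed by the parity of odd cycles.

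To get past this I would try to import the machinery behind the known fact that $S(G)\in\mathfrak{N}$ for regular $G$ (the $(2,b)$-biregular case). The graph $S(G)$ is always bipartite with the subdivision side of degree exactly $2$, so one can trace the degree-$2$ vertices into paths and cycles and build the coloring by an Euler-type splitting, processing the color classes of $\alpha$ in increasing order and extending each vertex palette by one color at a time; the invariant to maintain is that, after inserting the matching of $\alpha$-color $c$, every partially colored original vertex still holds a contiguous block, with the orientation of each newly placed consecutive pair chosen to prolong these blocks. I expect the main obstacle to be precisely the odd-cycle parity conflict identified above: when a chain of degree-$2$ vertices closes up into an odd cycle of $G$, the local consecutive-pair constraints cannot all be met while keeping a single contiguous interval at every vertex, and some color must be spent to break the parity. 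Resolving this uniformly---either by showing that the hypothesis $G\in\mathfrak{N}$ supplies enough slack to absorb such a defect with $O(1)$ extra colors, or by a parity-aware splitting that never creates an uncorrectable odd closure---is the crux of the conjecture and the step I would expect to demand a genuinely new idea.
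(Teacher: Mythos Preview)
The statement you are addressing is not a theorem in the paper but a \emph{conjecture}: the authors explicitly pose it as an open problem, motivated by Proposition~\ref{myproposition2.4.1} (the bipartite case) and the cited result that $S(G)\in\mathfrak{N}$ for regular $G$. There is therefore no proof in the paper to compare your proposal against.

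Your write-up is not a proof either, and you say so yourself. The reformulation of an interval coloring of $S(G)$ as a choice of base color plus orientation at each edge is correct, and your diagnosis of why the construction in Proposition~\ref{myproposition2.4.1} collapses outside the bipartite setting (the offset function $h$ would have to be a proper $2$-coloring of $V(G)$) is accurate. But the final paragraph is a sketch of a strategy, not an argument: you invoke the $(2,b)$-biregular machinery and an Euler-type greedy extension without specifying the invariant precisely or showing it can be maintained, and you end by conceding that the odd-cycle parity obstruction ``would demand a genuinely new idea.'' That concession is the whole point---the conjecture is open precisely because no one has supplied that idea. So what you have is a reasonable problem analysis and a plausible line of attack, but no proof, and the paper contains none either.
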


\begin{theorem}
\label{mytheorem2.4.1} If $G$ is a connected graph and
\begin{center}
$\vert E(G)\vert
> 1+ {\max\limits_{P\in \mathbf{P}}}{\sum\limits_{v\in
V(P)}}\ \left(d_{\widehat{G}}(v)-1\right)$,
\end{center}
where $\mathbf{P}$ is a set of all shortest paths in $S(G)$
connecting vertices $w_{ij}$, then $\widehat{G}\notin \mathfrak{N}$.
\end{theorem}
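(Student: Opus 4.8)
The plan is to mimic the structure of the three preceding impossibility proofs (Theorems~\ref{mytheorem2.1.1}, \ref{mytheorem2.2.1}, and especially \ref{mytheorem2.3.2}), since $\widehat{G}$ plays exactly the role that $\widetilde{T}$ did: a central hub vertex $u$ joined to all the subdivision vertices $w_{ij}$, whose colorability is constrained by how fast a color can propagate through $S(G)$. I would argue by contradiction: suppose $\widehat{G}$ has an interval $t$-coloring $\alpha$ for some $t \ge \deg_{\widehat{G}}(u) = \vert E(G)\vert$. The degree of $u$ in $\widehat{G}$ is exactly $\vert E(G)\vert$, because $u$ is joined to one subdivision vertex $w_{ij}$ for each edge $v_iv_j$ of $G$.

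Next I would isolate the two extreme edges at the hub. Let $w$ and $w'$ be the neighbors of $u$ with $\alpha(uw) = \min S(u,\alpha) = s$ and $\alpha(uw') = \max S(u,\alpha) = s + \vert E(G)\vert - 1$, where $w = w_{ij}$ and $w' = w_{kl}$ are subdivision vertices. Since the set $S(u,\alpha)$ is an interval and $u$ has $\vert E(G)\vert$ incident edges all receiving distinct colors, these two colors differ by exactly $\vert E(G)\vert - 1$. The key observation, as in the tree case, is that the colors of $uw$ and $uw'$ are linked by a path in $S(G) = \widehat{G} - u$ joining $w$ to $w'$. Because $G$ is connected, $S(G)$ is connected, so such a path exists; and along any interval-colored path the color changes by at most $d(v)-1$ at each internal vertex $v$ (the total number of colors that vertex can ``span'' to continue the interval). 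I would therefore take $P$ to be a shortest path in $S(G)$ between the two chosen subdivision vertices, and bound $\alpha(uw')$ from above.

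The propagation bound I would establish is the analogue of the inequality in Theorem~\ref{mytheorem2.3.2}: starting from the color on the first edge of $P$ incident to $w$, successive edges along $P$ can increase the color by at most $d_{\widehat{G}}(v)-1$ as we pass through each vertex $v$ on $P$, so the color of the last edge of $P$ (incident to $w'$) is at most $s + \sum_{v\in V(P)}\bigl(d_{\widehat{G}}(v)-1\bigr)$ or a similarly shaped expression. Combining this with the final step through $w'$ into $u$ (picking up one more unit from the constraint at $w'$), the maximum color reachable is at most $s + 1 + \sum_{v\in V(P)}\bigl(d_{\widehat{G}}(v)-1\bigr)$. Setting this against $\alpha(uw') = s + \vert E(G)\vert - 1$ and taking the most favorable path, i.e. the path maximizing the sum (hence the $\max_{P\in\mathbf{P}}$ in the hypothesis, which an adversarial coloring could exploit), yields $\vert E(G)\vert \le 1 + \max_{P\in\mathbf{P}}\sum_{v\in V(P)}\bigl(d_{\widehat{G}}(v)-1\bigr)$, contradicting the hypothesis.

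The main obstacle I anticipate is bookkeeping the propagation inequality precisely over $\widehat{G}$ rather than over the tree $T$, since here the relevant degrees are those in $\widehat{G}$ (every internal subdivision vertex $w_{ij}$ has degree $3$ in $\widehat{G}$ because of its extra edge to $u$, whereas the original vertices $v_i$ have their $G$-degree), and one must be careful that the path $P$ lives in $S(G)$ while the degrees counted are in $\widehat{G}$. I would also need to verify that the ``$+1$'' and the exact placement of the $\min/\max$ over $\mathbf{P}$ match: since the coloring is adversarial, the correct bound must use the path that is \emph{hardest} to rule out, which is why the hypothesis quantifies the sum with a maximum over all shortest paths between subdivision vertices. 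Getting the constant and the universal-versus-existential quantifier on $P$ exactly right is the delicate point; the rest follows the template of the earlier proofs almost verbatim.
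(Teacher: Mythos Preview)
Your proposal is correct and follows the paper's proof essentially verbatim: contradiction, pick the extreme edges $uw$ and $uw'$ at the hub, propagate the color bound along a shortest path in $S(G)=\widehat G-u$ using degrees in $\widehat G$, and compare with $|E(G)|-1$. The one bookkeeping fix you were worried about: the last edge of $P$ has color at most $s+\sum_{j=1}^{k}(d_{\widehat G}(v_j)-1)$ (sum over $V(P)\setminus\{w'\}$), and the final step through $w'$ then contributes $d_{\widehat G}(w')-1$, so $\alpha(uw')\le s+\sum_{v\in V(P)}(d_{\widehat G}(v)-1)$ with no extra ``$+1$''---the $+1$ in the hypothesis comes from the $-1$ on the left side $s+|E(G)|-1$.
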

\begin{proof}
Suppose, to the contrary, that $\widehat{G}$ has an interval
$t$-coloring $\alpha$ for some $t\geq \vert E(G)\vert$.

Consider the vertex $u$. Let $w$ and $w^{\prime}$ be two vertices
adjacent to $u$ such that $\alpha(uw)={\min}~S(u,\alpha)=s$ and
$\alpha(uw^{\prime})={\max}~S(u,\alpha)=s+\vert E(G)\vert-1$. Since
$\widehat{G}-u$ is isomorphic to $S(G)$ and connected, there is a
shortest path $P(w,w^{\prime})$ in $\widehat{G}-u$ joining $w$ with
$w^{\prime}$, where
\begin{center}
$P(w,w^{\prime})=(v_{1},e_{1},v_{2},\ldots,v_{i},e_{i},v_{i+1},\ldots,v_{k},e_{k},v_{k+1})$,
$v_{1}=w$, $v_{k+1}=w^{\prime}$.
\end{center}

Note that
\begin{center}
$\alpha(v_{i}v_{i+1})\leq s+\underset{j=1}{\overset{i}{\sum
}}(d_{\widehat{G}}(v_{j})-1)$ for $1\leq i\leq k$
\end{center}
and
\begin{center}
$\alpha(v_{k+1}u)=\alpha(w^{\prime}u)\leq
s+\underset{j=1}{\overset{k+1}{\sum }}(d_{\widehat{G}}(v_{j})-1)$.
\end{center}

Hence
\begin{center}
$s+\vert E(G)\vert -1={\max}~S(u,\alpha)=\alpha(uw^{\prime})\leq
s+\underset{j=1}{\overset{k+1}{\sum }}(d_{\widehat{G}}(v_{j})-1)\leq
s+{\max\limits_{P\in \mathbf{P}}}{\sum\limits_{v\in V(P)}}\
\left(d_{\widehat{G}}(v)-1\right)$
\end{center}
and thus
\begin{center}
$\vert E(G)\vert \leq 1+{\max\limits_{P\in
\mathbf{P}}}{\sum\limits_{v\in V(P)}}\
\left(d_{\widehat{G}}(v)-1\right)$,
\end{center}
which is a contradiction.
 ~$\square$
\end{proof}

\begin{corollary}
\label{mycorollary2.4.1} If $n\geq 7$, then $\widehat{K}_{n}\notin
\mathfrak{N}$.
\end{corollary}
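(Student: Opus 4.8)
The plan is to apply Theorem~\ref{mytheorem2.4.1} directly with $G=K_{n}$, so that $\widehat{G}=\widehat{K}_{n}$; it then suffices to verify the hypothesis $\vert E(K_{n})\vert > 1+\max_{P\in\mathbf{P}}\sum_{v\in V(P)}\left(d_{\widehat{K}_{n}}(v)-1\right)$ for every $n\geq 7$. First I would record the two quantities entering this inequality. Since $K_{n}$ has $\binom{n}{2}$ edges, the left-hand side is $\vert E(K_{n})\vert=\frac{n(n-1)}{2}$. For the degrees in $\widehat{K}_{n}$, each subdivision vertex $w_{ij}$ is joined to $v_{i}$, $v_{j}$ and the apex $u$, so $d_{\widehat{K}_{n}}(w_{ij})=3$, while each original vertex $v_{i}$ retains its $n-1$ neighbours among the $w$-vertices (the apex $u$ is not adjacent to any $v_{i}$), so $d_{\widehat{K}_{n}}(v_{i})=n-1$.

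The heart of the argument is to evaluate the maximum on the right, which reduces to understanding the shortest paths between subdivision vertices in $S(K_{n})$. Here I would observe that two subdivision vertices $w_{ij}$ and $w_{kl}$ are at distance $2$ when the edges $v_{i}v_{j}$ and $v_{k}v_{l}$ share an endpoint (e.g.\ the path $(w_{ij},v_{i},w_{il})$ when $k=i$), and at distance $4$ when these edges are disjoint, a shortest path then being $(w_{ij},v_{i},w_{ik},v_{k},w_{kl})$, which exists because $K_{n}$ is complete. Since $S(K_{n})$ is bipartite with parts $\{v_{i}\}$ and $\{w_{ij}\}$, every such shortest path alternates between the two parts, and the longest ones (realized by disjoint edges, which exist once $n\geq 4$) have length $4$ and thus contain exactly three $w$-vertices and two $v$-vertices. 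Consequently the maximum of $\sum_{v\in V(P)}\left(d_{\widehat{K}_{n}}(v)-1\right)$ equals $3(3-1)+2\bigl((n-1)-1\bigr)=2n+2$.

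Finally I would substitute into the hypothesis of Theorem~\ref{mytheorem2.4.1}, which becomes $\frac{n(n-1)}{2}>1+(2n+2)=2n+3$; this rearranges to $(n-6)(n+1)>0$, holding precisely when $n\geq 7$. Hence $\widehat{K}_{n}\notin\mathfrak{N}$ for all $n\geq 7$, as claimed.

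The step I expect to be the main obstacle is the computation of $\max_{P\in\mathbf{P}}\sum_{v\in V(P)}\left(d_{\widehat{K}_{n}}(v)-1\right)$, that is, being certain which configuration of the two $w$-endpoints maximizes it. The subtlety is that the longer shortest paths (distance $4$, disjoint edges) dominate the shorter ones (distance $2$, adjacent edges), because the extra original vertex contributes $n-2>2$ to the sum; I would therefore want to confirm carefully that the diameter among $w$-vertices in $S(K_{n})$ is exactly $4$, so that no shortest path between $w$-vertices is longer, and that every distance-$4$ shortest path has the claimed vertex profile of three $w$-vertices and two $v$-vertices.
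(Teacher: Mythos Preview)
Your proposal is correct and follows exactly the route the paper intends: the corollary is stated without proof immediately after Theorem~\ref{mytheorem2.4.1}, and your verification that $\frac{n(n-1)}{2}>2n+3$ for $n\geq 7$ is precisely the computation needed (the companion Corollary~\ref{mycorollary2.4.2} confirms this, since the analogous count there yields $mn>m+n+5$). Your analysis of the shortest paths in $S(K_{n})$ and of the degree profile along them is accurate; the only minor imprecision is in the aside ``the extra original vertex contributes $n-2>2$'': the length-$4$ path adds both an extra $w$-vertex (contributing $2$) and an extra $v$-vertex (contributing $n-2$), so the net gain over a length-$2$ path is $n>0$, which holds for all relevant $n$, not just $n\geq 5$.
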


\begin{corollary}
\label{mycorollary2.4.2} If $mn-m-n>5$, then
$\widehat{K}_{m,n}\notin \mathfrak{N}$.
\end{corollary}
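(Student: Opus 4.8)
The plan is to apply Theorem \ref{mytheorem2.4.1} with $G=K_{m,n}$, so that $\widehat{G}=\widehat{K}_{m,n}$. Since $K_{m,n}$ is connected and $|E(K_{m,n})|=mn$, it suffices to show that the right-hand side $1+\max_{P\in\mathbf{P}}\sum_{v\in V(P)}\left(d_{\widehat{K}_{m,n}}(v)-1\right)$ equals $m+n+5$; then the stated inequality $mn-m-n>5$ is literally the hypothesis of that theorem, and the conclusion $\widehat{K}_{m,n}\notin\mathfrak{N}$ follows immediately.

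To evaluate the maximum, I would first record the degrees in $\widehat{K}_{m,n}$. Writing $U$ and $V$ for the two parts with $|U|=m$ and $|V|=n$, each original vertex of $U$ retains degree $n$ and each vertex of $V$ retains degree $m$ after subdivision and after attaching the apex $u$; every subdivision vertex $w_{ab}$, corresponding to the edge joining the $a$-th vertex of $U$ with the $b$-th vertex of $V$, has degree $3$, being joined to its two endpoints and to $u$. Next I would analyse distances between subdivision vertices in $S(K_{m,n})$. Since $S(K_{m,n})$ is bipartite with the subdivision vertices forming one part, two such vertices always lie at even distance; concretely, $w_{ab}$ and $w_{cd}$ are at distance $2$ when they share a coordinate (that is, $a=c$ or $b=d$) and at distance $4$ when $a\neq c$ and $b\neq d$, and there is no distance-$3$ connection because no two original vertices are adjacent in $S(K_{m,n})$.

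It then remains to maximise $\sum_{v\in V(P)}\left(d_{\widehat{K}_{m,n}}(v)-1\right)$ over all shortest paths $P$ joining a pair of $w$-vertices. A shortest path of length $4$ alternates $w$-vertex, original vertex, $w$-vertex, original vertex, $w$-vertex, and—because each subdivision vertex meets exactly one vertex of $U$ and one of $V$—its two interior original vertices are forced to be one vertex of $U$ and one vertex of $V$. Such a path therefore contributes $2+(n-1)+2+(m-1)+2=m+n+4$, independently of the choice, whereas every length-$2$ path contributes only $n+3$ or $m+3$. Hence the maximum is $m+n+4$, attained once $m,n\geq 2$, which is guaranteed since $mn-m-n>5$ forces $(m-1)(n-1)\geq 7$. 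Consequently the right-hand side of Theorem \ref{mytheorem2.4.1} equals $1+(m+n+4)=m+n+5$, and the inequality $mn>m+n+5$ is exactly $mn-m-n>5$.

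I expect the only delicate point to be the identification of the maximising path: one must verify both that the genuinely longest shortest paths (length $4$) beat the shorter ones and that among all length-$4$ paths the degree sum is constant. Both facts reduce to the single observation that every subdivision vertex is incident to exactly one vertex on each side of the bipartition, which pins the two high-degree interior vertices down as one element of $U$ and one element of $V$.
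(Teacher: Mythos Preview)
Your proposal is correct and follows exactly the route the paper intends: the corollary is stated without proof because it is meant to be read off directly from Theorem~\ref{mytheorem2.4.1}, and your computation of the right-hand side as $m+n+5$ (via the forced $U$--$V$ alternation on any length-$4$ shortest path in $S(K_{m,n})$, together with $(m-1)(n-1)\ge 7$ ensuring such paths exist) is precisely the verification that makes the corollary immediate.
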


\begin{figure}[h]
\begin{center}
\includegraphics[width=25pc]{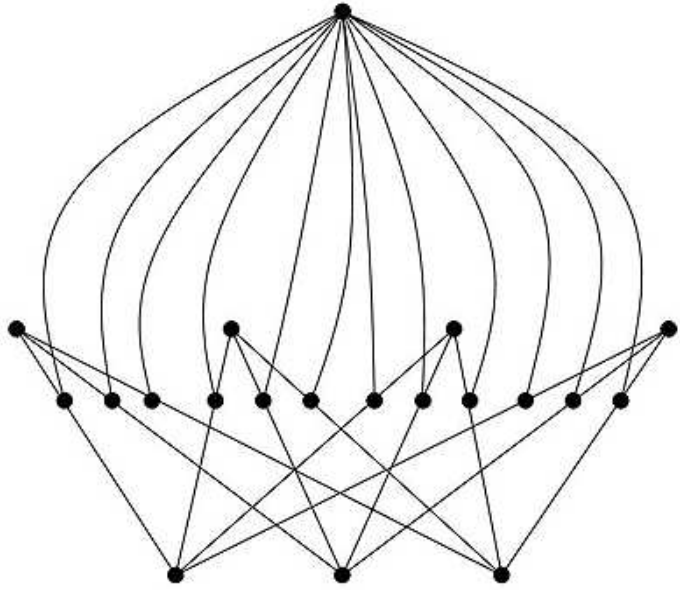}\\
\caption{The graph $\widehat{K}_{3,4}$.}\label{fig5}
\end{center}
\end{figure}

Now we show that the graph $\widehat{K}_{3,4}$ shown in Fig.
\ref{fig5} has no interval coloring.

\begin{theorem}
\label{mytheorem2.4.2} $\widehat{K}_{3,4}\notin \mathfrak{N}$.
\end{theorem}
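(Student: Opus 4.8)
The plan is to carry the argument of Theorem~\ref{mytheorem2.4.1} one notch further. For $G=K_{3,4}$ one has $mn-m-n=5$, so Corollary~\ref{mycorollary2.4.2} just misses this graph, and indeed its degree-sum bound is met with equality rather than strict inequality; the strategy is to assume an interval coloring exists, note that this forces the bound to be \emph{tight}, and then derive a contradiction from that tightness by exploiting the existence of two different shortest paths realising the critical sum. Write the parts of $K_{3,4}$ as $A=\{a_{1},a_{2},a_{3}\}$ (degree $4$) and $B=\{b_{1},b_{2},b_{3},b_{4}\}$ (degree $3$), let $w_{ij}$ subdivide $a_{i}b_{j}$, so that in $\widehat{K}_{3,4}$ every $w_{ij}$ has degree $3$ with neighbours $a_{i},b_{j},u$, and $d_{\widehat{G}}(u)=12$. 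Suppose $\widehat{K}_{3,4}$ has an interval coloring $\alpha$; since $d_{\widehat{G}}(u)=12$ the colors at $u$ form an interval $S(u,\alpha)=[s,s+11]$, and I let $w,w'$ be the neighbours of $u$ with $\alpha(uw)=s$ and $\alpha(uw')=s+11$.

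First I would pin down the distance between $w$ and $w'$ in $S(K_{3,4})$. Since the subdivision vertices form one part of the bipartition of $S(K_{3,4})$, any two of them are at even distance, namely $2$ if they share an endpoint in $K_{3,4}$ and $4$ otherwise. If $w$ and $w'$ were at distance $2$, the path $w-o-w'$ through a single original vertex $o$ would give, exactly as in Theorem~\ref{mytheorem2.4.1}, $\alpha(uw')\le s+(d_{\widehat{G}}(w)-1)+(d_{\widehat{G}}(o)-1)+(d_{\widehat{G}}(w')-1)\le s+2+3+2=s+7<s+11$, a contradiction. Hence $w=w_{i_{0}j_{0}}$ and $w'=w_{k_{0}l_{0}}$ with $i_{0}\ne k_{0}$ and $j_{0}\ne l_{0}$, and they lie at distance $4$.

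The heart of the argument is that exactly two shortest paths join such vertices: $P_{1}=(w_{i_{0}j_{0}},a_{i_{0}},w_{i_{0}l_{0}},b_{l_{0}},w_{k_{0}l_{0}})$, which leaves $w$ through its $A$-neighbour, and $P_{2}=(w_{i_{0}j_{0}},b_{j_{0}},w_{k_{0}j_{0}},a_{k_{0}},w_{k_{0}l_{0}})$, which leaves $w$ through its $B$-neighbour. Each contains one $A$-vertex, one $B$-vertex and three subdivision vertices, so $\sum_{v}(d_{\widehat{G}}(v)-1)=3+2+6=11$, and the estimate of Theorem~\ref{mytheorem2.4.1} reads $s+11=\alpha(uw')\le s+11$ for both $P_{1}$ and $P_{2}$; thus every inequality in the chain must in fact be an equality, for each path. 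In particular the first inequality, $\alpha(\text{first edge})\le s+(d_{\widehat{G}}(w)-1)=s+2$, is an equality for each path. The first edge of $P_{1}$ is $wa_{i_{0}}$ and the first edge of $P_{2}$ is $wb_{j_{0}}$, so tightness would force $\alpha(wa_{i_{0}})=s+2=\alpha(wb_{j_{0}})$; but these are two distinct edges incident to $w$, contradicting the properness of $\alpha$. This contradiction proves $\widehat{K}_{3,4}\notin\mathfrak{N}$.

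The main obstacle, and the only genuinely new ingredient beyond Theorem~\ref{mytheorem2.4.1}, is the simultaneous tightness of two distinct shortest paths: the estimate holds along any path from $w$ to $w'$, and here two combinatorially different shortest paths both attain the critical value $11$, which is precisely what makes the equality case self-contradictory. In writing this up carefully I would verify two small points: that $P_{1}$ and $P_{2}$ are indeed the only shortest paths (immediate from the structure of $K_{3,4}$, since the second vertex of a length-$4$ path must be $a_{i_{0}}$ or $b_{j_{0}}$ and each choice determines the remaining vertices), and that equality in the first step genuinely forces the first-edge color to be $s+2$ (which follows because $S(w,\alpha)$ is an interval of length $3$ containing $\alpha(uw)=s$, hence equals $[s,s+2]$).
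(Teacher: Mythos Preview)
Your argument is correct and follows essentially the same route as the paper: both proofs rule out the case where $w$ and $w'$ share a neighbour via the path estimate, and in the remaining case exploit the two shortest $w$--$w'$ paths (your $P_1,P_2$ are the paper's $Q,P$) to force both non-$u$ edges at $w$ to receive colour $s+2$, contradicting properness. One small slip to clean up: your closing parenthetical asserts $S(w,\alpha)=[s,s+2]$ merely because it is a length-$3$ interval containing $s$, which does not follow; fortunately you do not need this, since the back-propagation of the equalities along each path already yields $\alpha(wa_{i_0})=\alpha(wb_{j_0})=s+2$ directly.
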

\begin{proof}
Let
$V(\widehat{K}_{3,4})=\{u,v_{1},v_{2},v_{3},v_{4},v_{5},v_{6},v_{7}\}\cup
\{w_{ij}:1\leq i\leq 3, 4\leq j\leq 7\}$ and
$E(\widehat{K}_{3,4})=\{v_{i}w_{ij},v_{j}w_{ij},uw_{ij}:1\leq i\leq
3, 4\leq j\leq 7\}$.

Suppose that $\widehat{K}_{3,4}$ has an interval $t$-coloring
$\alpha$ for some $t\geq 12$.

Consider the vertex $u$. Let $w_{i_{0}j_{0}}$ and $w_{i_{1}j_{1}}$
be two vertices adjacent to $u$ such that
$\alpha(uw_{i_{0}j_{0}})={\min}~S(u,\alpha)=s$ and
$\alpha(uw_{i_{1}j_{1}})={\max}~S(u,\alpha)=s+11$. We consider two
cases.

Case 1: $i_{0}=i_{1}$ or $j_{0}=j_{1}$.

If $i_{0}=i_{1}$, then
$v_{i_{0}}w_{i_{0}j_{0}},v_{i_{0}}w_{i_{0}j_{1}}\in
E(\widehat{K}_{3,4})$. This implies that

\begin{center}
$\alpha(v_{i_{0}}w_{i_{0}j_{0}})\leq s+2$ and
$\alpha(v_{i_{0}}w_{i_{0}j_{1}})\leq s+5$.
\end{center}

Hence,
\begin{center}
$s+11={\max}~S(u,\alpha)=\alpha(uw_{i_{0}j_{1}})\leq s+7$,
\end{center}
which is impossible.

If $j_{0}=j_{1}$, then
$v_{j_{0}}w_{i_{0}j_{0}},v_{j_{0}}w_{i_{1}j_{0}}\in
E(\widehat{K}_{3,4})$. This implies that

\begin{center}
$\alpha(v_{j_{0}}w_{i_{0}j_{0}})\leq s+2$ and
$\alpha(v_{j_{0}}w_{i_{1}j_{0}})\leq s+4$.
\end{center}

Hence,
\begin{center}
$s+11={\max}~S(u,\alpha)=\alpha(uw_{i_{1}j_{0}})\leq s+6$,
\end{center}
which is impossible.

Case 2: $i_{0}\neq i_{1}$ and $j_{0}\neq j_{1}$.

In this case the edges $v_{i_{0}}v_{j_{0}}$ and  $v_{i_{1}}v_{j_{1}}$
are independent in $K_{3,4}$. Clearly, any two independent edges in
$K_{3,4}$ lie on the cycle of a length four. Hence, there is a cycle
$C=w_{i_{0}j_{0}}v_{j_{0}}w_{i_{1}j_{0}}v_{i_{1}}w_{i_{1}j_{1}}v_{j_{1}}w_{i_{0}j_{1}}v_{i_{0}}w_{i_{0}j_{0}}$
in $\widehat{K}_{3,4}$, which is consists of paths $P$ and $Q$,
where
\begin{center}
$P=\left(w_{i_{0}j_{0}},v_{j_{0}}w_{i_{0}j_{0}},v_{j_{0}},v_{j_{0}}w_{i_{1}j_{0}},w_{i_{1}j_{0}},v_{i_{1}}w_{i_{1}j_{0}},v_{i_{1}},v_{i_{1}}w_{i_{1}j_{1}},w_{i_{1}j_{1}}\right)$
\end{center}
and
\begin{center}
$Q=\left(w_{i_{0}j_{0}},v_{i_{0}}w_{i_{0}j_{0}},v_{i_{0}},v_{i_{0}}w_{i_{0}j_{1}},w_{i_{0}j_{1}},v_{j_{1}}w_{i_{0}j_{1}},v_{j_{1}},v_{j_{1}}w_{i_{1}j_{1}},w_{i_{1}j_{1}}\right)$.
\end{center}

If $\alpha(v_{j_{0}}w_{i_{0}j_{0}})=s+1$, then, by considering the
path $P$, we have $\alpha(v_{i_{1}}w_{i_{1}j_{1}})\leq s+8$ and
${\max}~S(w_{i_{1}j_{1}},\alpha)\leq s+10$, a contradiction.

If $\alpha(v_{i_{0}}w_{i_{0}j_{0}})=s+1$, then, by considering the
path $Q$, we have $\alpha(v_{j_{1}}w_{i_{1}j_{1}})\leq s+8$ and
${\max}~S(w_{i_{1}j_{1}},\alpha)\leq s+10$, a contradiction.

Hence,
$\alpha(v_{j_{0}}w_{i_{0}j_{0}})=\alpha(v_{i_{0}}w_{i_{0}j_{0}})=s+2$,
which is a contradiction.
 ~$\square$
\end{proof}

\begin{figure}[h]
\begin{center}
\includegraphics[width=25pc]{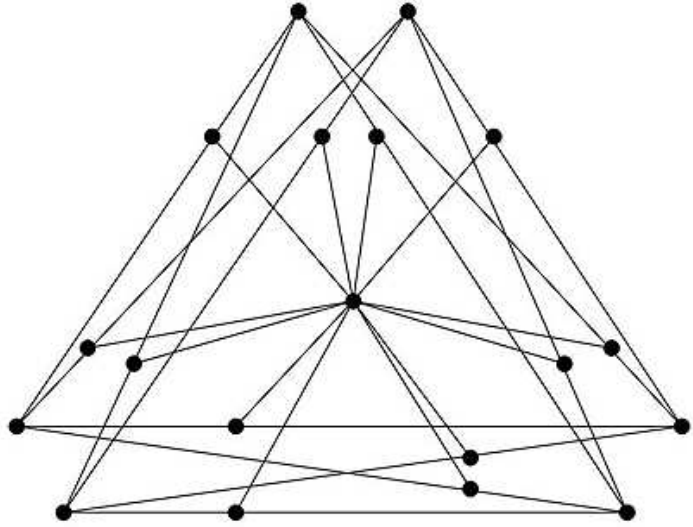}\\
\caption{The graph $\widehat{K}_{2,2,2}$.}\label{fig6}
\end{center}
\end{figure}

Note that the graph $\widehat{K}_{3,4}$ has $20$ vertices and
maximum degree $12$. Now we show that there is a connected bipartite
graph $G$ with $\vert V(G)\vert =19$ and $\Delta(G)=12$ which is not
interval colorable. Let $K_{2,2,2}$ be a complete $3$-partite graph
with two vertices in each part. Then the graph $\widehat{K}_{2,2,2}$
shown in Fig. \ref{fig6} is not interval colorable.

\begin{theorem}
\label{mytheorem2.4.3} $\widehat{K}_{2,2,2}\notin \mathfrak{N}$.
\end{theorem}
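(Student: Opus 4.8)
The plan is to follow the pattern of the proof of Theorem~\ref{mytheorem2.4.2}, but with a twist: since all six original vertices of $K_{2,2,2}$ have the same degree $4$ (unlike the parts of $K_{3,4}$), the crude ``one shortest path'' estimate of Theorem~\ref{mytheorem2.4.1} only yields $\vert E(K_{2,2,2})\vert\le 13$, which is not a contradiction. So I must squeeze out the missing unit of slack from the \emph{two-sided} interval condition. Suppose $\widehat{K}_{2,2,2}$ has an interval $t$-coloring $\alpha$ with $t\ge 12$; write the original vertices as $a_1,a_2,b_1,b_2,c_1,c_2$ and the subdivision vertex of an edge $xy$ as $w_{xy}$, so $d(u)=12$, $d(x)=4$ for every original $x$, and $d(w_{xy})=3$. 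Then $S(u,\alpha)=[s,s+11]$ for some $s$. The workhorse is a window lemma: for every original vertex $x$, the four $u$-colors $\alpha(uw_{xy})$ of its incident subdivision vertices span at most $7$, because the four edges at $x$ use $4$ consecutive colors (span $3$) and at each degree-$3$ vertex $w_{xy}$ one has $\vert\alpha(uw_{xy})-\alpha(xw_{xy})\vert\le 2$, giving span $\le 3+2+2=7$.

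Let $w,w'$ be the neighbors of $u$ with $\alpha(uw)=s$ and $\alpha(uw')=s+11$, corresponding to edges $e,e'$ of $K_{2,2,2}$. If $e,e'$ share a vertex $x$, then $w$ and $w'$ are both incident to $x$, so the window of $x$ would contain $s$ and $s+11$, a span of $11>7$ --- contradiction. Hence I may assume $e,e'$ are independent. A finite check over the two ways four endpoints can distribute among three parts of size $2$, namely $(2,2,0)$ and $(2,1,1)$, shows that any two independent edges lie on a common $4$-cycle; its subdivision is an $8$-cycle $C$ on which $w$ and $w'$ are antipodal, cutting $C$ into two internally disjoint paths $P,Q$ of length $4$. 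Running the Theorem~\ref{mytheorem2.4.1}-type estimate along $P$ and along $Q$, and combining with the $3$-interval conditions at $w$ and $w'$, I would force the colors at $w$ to be exactly $\{s,s+1,s+2\}$ and at $w'$ to be exactly $\{s+9,s+10,s+11\}$; both estimates then hold with equality, which pins every color along $P$ and $Q$.

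For the $(2,1,1)$ distribution, say $e=a_1b_1$ and $e'=a_2c_1$, the cycle uses $a_1,b_1,a_2,c_1$, and besides the two arcs there is a \emph{third} length-$4$ path $w,b_1,w_{b_1c_1},c_1,w'$. I would finish here by observing that the pinned values force $\alpha(b_1w_{b_1c_1})\le s+4$ (coming off $b_1$) and $\alpha(c_1w_{b_1c_1})\ge s+7$ (coming off $c_1$), which cannot both lie in the $3$-element interval $S(w_{b_1c_1},\alpha)$.

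The main obstacle is the $(2,2,0)$ distribution, say $e=a_1b_1$ and $e'=a_2b_2$: now the only length-$4$ paths are the two arcs, the two remaining vertices $c_1,c_2$ sit off the cycle, and the two-path argument alone gives no contradiction. The plan is to push the pinned colors outward. The windows of $a_1,b_1$ force their $c$-incident edges to carry small colors and those of $a_2,b_2$ to carry large ones; from this every edge incident to $c_1$ or to $c_2$ is forced into the band $[s+3,s+8]$. A short case analysis of how the color pairs $\{s+2,s+3\}$ at $a_1$, $\{s+3,s+4\}$ at $b_1$, $\{s+7,s+8\}$ at $b_2$ and $\{s+8,s+9\}$ at $a_2$ split between $c_1$ and $c_2$ then shows that the four edge-colors at $c_1$, or those at $c_2$, cannot form $4$ consecutive integers. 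This last clash is the quantitative heart of the argument and the step I expect to be the hardest to organize cleanly.
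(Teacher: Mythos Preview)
Your overall architecture matches the paper's proof almost exactly: both assume an interval colouring, look at the two $u$-edges carrying colours $s$ and $s+11$, dispose of the adjacent case by a short path estimate, and then split the independent case into the two configurations you call $(2,1,1)$ and $(2,2,0)$ (the paper's subcases $(i_1,j_1)=(4,5)$ and $(i_1,j_1)=(2,5)$). For $(2,1,1)$ you and the paper both pin the colours along the $8$-cycle and finish via the extra subdivision vertex joining the two ``singly hit'' original vertices; your window lemma is a clean way to package the adjacent case that the paper handles by direct symmetry.

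There is, however, a genuine gap in your plan for the $(2,2,0)$ case. You propose to derive the contradiction purely from how the pairs $\{s+2,s+3\}$, $\{s+3,s+4\}$, $\{s+7,s+8\}$, $\{s+8,s+9\}$ split between $c_1$ and $c_2$, concluding that one of $S(c_1,\alpha)$, $S(c_2,\alpha)$ cannot be a $4$-interval. But those pairs are the colours on the \emph{$x$-side} of the subdivision vertices $w_{xc_i}$; the colours on the $c_i$-side are only determined to within $\pm 2$. With that slack, for every one of the $2^4$ splits one can in fact choose values $\alpha(c_iw_{xc_i})$ forming $4$-intervals at both $c_1$ and $c_2$ simultaneously (e.g.\ for the split sending $(s+2,s+3,s+7,s+8)$ toward $c_1$, take $S(c_1,\alpha)=[s+3,s+6]$). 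So the interval conditions at $c_1,c_2$ alone do not clash.

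What actually forces the contradiction is the \emph{properness at $u$}: the eight remaining $u$-colours $\alpha(uw_{xc_i})$ must be distinct and must avoid $s,s+5,s+6,s+11$ (already used on the cycle). In the paper's execution of this case, after pinning the cycle and using the $v_3\leftrightarrow v_4$ symmetry, the chase terminates precisely with a collision at $u$, namely $\alpha(uw_{35})=s+6=\alpha(uw_{26})$. Your plan will close once you bring the $u$-colour constraints into the final case analysis; as written, the ``short case analysis'' on the splits alone is not enough.
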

\begin{proof}
Let
$V(\widehat{K}_{2,2,2})=\{u,v_{1},v_{2},v_{3},v_{4},v_{5},v_{6}\}\cup
\{w_{ij}:1\leq i<j\leq 6,(i,j)\notin \{(1,2),(3,4),\\
(5,6)\}\}$ and
$E(\widehat{K}_{2,2,2})=\{v_{i}w_{ij},v_{j}w_{ij},uw_{ij}:1\leq
i<j\leq 6,(i,j)\notin \{(1,2),(3,4),(5,6)\}\}$.

Suppose that $\widehat{K}_{2,2,2}$ has an interval $t$-coloring
$\alpha$ for some $t\geq 12$.

Consider the vertex $u$. Let $w_{i_{0}j_{0}}$ and $w_{i_{1}j_{1}}$
be two vertices adjacent to $u$ such that
$\alpha(uw_{i_{0}j_{0}})={\min}~S(u,\alpha)=s$ and
$\alpha(uw_{i_{1}j_{1}})={\max}~S(u,\alpha)=s+11$. By the symmetry
of the graph $\widehat{K}_{2,2,2}$, we may assume that
$(i_{0},j_{0})=(1,6)$. We consider two cases.

Case 1: $v_{1}v_{6}$ and $v_{i_{1}}v_{j_{1}}$ are adjacent in
$K_{2,2,2}$.

By the symmetry of the graph $\widehat{K}_{2,2,2}$, it suffices to
consider $(i_{1},j_{1})=(1,4)$ and $(i_{1},j_{1})=(1,5)$.

If $\alpha(uw_{14})=s+11$ or $\alpha(uw_{15})=s+11$, then
$\alpha(v_{1}w_{16})\leq s+2$ and $\alpha(v_{1}w_{1j_{1}})\leq s+5$.
Hence, $\alpha(uw_{1j_{1}})\leq s+7$, which is impossible.

Case 2: $v_{1}v_{6}$ and $v_{i_{1}}v_{j_{1}}$ are independent in
$K_{2,2,2}$.

By the symmetry of the graph $\widehat{K}_{2,2,2}$, it suffices to
consider $(i_{1},j_{1})=(4,5)$ and $(i_{1},j_{1})=(2,5)$.

If $\alpha(uw_{45})=s+11$, then either $\alpha(v_{1}w_{16})=s+1$ or
$\alpha(v_{1}w_{16})=s+2$, and in both cases the colors of all edges
along the cycle
$C=w_{16}v_{1}w_{15}v_{5}w_{45}v_{4}w_{46}v_{6}w_{16}$ are known.
This implies that $\alpha(v_{1}w_{14})\leq s+4$, but
$\alpha(v_{4}w_{14})\geq s+7$, which is a contradiction.

If $\alpha(uw_{25})=s+11$, then, by the symmetry of the graph
$\widehat{K}_{2,2,2}$, we may assume that $\alpha(v_{1}w_{16})=s+1$.
Clearly, in this case the colors of all edges along the cycle
$C=w_{16}v_{1}w_{15}v_{5}w_{25}v_{2}w_{26}v_{6}w_{16}$ are known.
This implies that $\alpha(uw_{26})=s+6$. By the symmetry of the
graph $\widehat{K}_{2,2,2}$, we may assume that
$\alpha(v_{1}w_{13})=s+2$ and $\alpha(v_{1}w_{14})=s+3$. It is easy
to see that $\alpha(uw_{13})=s+1$. Hence, $\alpha(v_{3}w_{13})=s+3$
and taking into account that $\alpha(v_{2}w_{25})=s+10$, we have
$\alpha(v_{3}w_{23})=s+6$. This implies that
$\alpha(v_{3}w_{35})\leq s+5$. On the other hand, since
$\alpha(v_{5}w_{25})=s+9$, we have $\alpha(v_{5}w_{35})\geq s+7$ and
thus $\alpha(uw_{35})=s+6=\alpha(uw_{26})$, which is a
contradiction. ~$\square$
\end{proof}

Also, we investigate bipartite graphs which are close to the graph
$\widehat{K}_{3,4}$. In particular, we observe that the graph
obtained from $\widehat{K}_{3,4}$ by deleting any edge incident to
the vertex of maximum degree is not interval colorable, too. Let
$\widehat{K}_{3,4}^{\prime}$ be a graph obtained from
$\widehat{K}_{3,4}$ by deleting any edge incident to the vertex $u$.
Clearly, $\widehat{K}_{3,4}^{\prime}$ has $20$ vertices and maximum
degree $11$.

\begin{theorem}
\label{mytheorem2.4.4} $\widehat{K}_{3,4}^{\prime}\notin
\mathfrak{N}$.
\end{theorem}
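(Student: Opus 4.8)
The plan is to imitate the structure of the proof of Theorem~\ref{mytheorem2.4.2}, since $\widehat{K}_{3,4}^{\prime}$ is obtained from $\widehat{K}_{3,4}$ by deleting exactly one edge $uw_{i^{*}j^{*}}$ at the apex $u$. Assume for contradiction that $\widehat{K}_{3,4}^{\prime}$ has an interval $t$-coloring $\alpha$. The apex $u$ now has degree $11$, so $S(u,\alpha)$ is an interval of $11$ consecutive colors; let $w_{i_0j_0}$ and $w_{i_1j_1}$ realize its minimum $s$ and maximum $s+10$ respectively. The governing identity remains that any short path or $4$-cycle in $\widehat{K}_{3,4}^{\prime}-u \cong S(K_{3,4}) \setminus \{w_{i^{*}j^{*}}\}$ forces the color at its far end to grow by at most $1$ per interior vertex, exactly as in Theorem~\ref{mytheorem2.4.2}. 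The first step is therefore to recompute the degree bounds: each subdivision vertex $w_{ij}$ now has degree $3$ except for $w_{i^{*}j^{*}}$, which has degree $2$; each $v_i$ still has its full degree in $K_{3,4}$ (namely $4$ for the part of size $3$ and $3$ for the part of size $4$).

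First I would exploit symmetry to fix the deleted edge. Because $K_{3,4}$ is edge-transitive on the two orbits of missing pairs only up to the bipartition, and because deleting $uw_{i^{*}j^{*}}$ breaks symmetry, I would without loss of generality set the deleted edge to be $uw_{11}$ (so $w_{11}$ is the degree-$2$ subdivision vertex), and then split into the two cases of Theorem~\ref{mytheorem2.4.2} according to whether the extreme edges $uw_{i_0j_0}$, $uw_{i_1j_1}$ lie on a common $v_i$ (Case~1: $i_0=i_1$ or $j_0=j_1$) or are independent (Case~2). In Case~1 the argument is the same chain $\alpha(uw_{i_0j_0})=s \Rightarrow \alpha(v\,w_{i_0j_0})\le s+2 \Rightarrow \alpha(v\,w_{i_1j_1})\le s+5 \Rightarrow \alpha(uw_{i_1j_1})\le s+7$, which now must reach only $s+10$ rather than $s+11$; this still overshoots, since $s+7 < s+10$, so Case~1 dies immediately with room to spare.

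The delicate part is Case~2, where $v_{i_0}v_{j_0}$ and $v_{i_1}v_{j_1}$ are independent and hence lie on a $4$-cycle $C$ of $K_{3,4}$, lifting to an $8$-cycle in $\widehat{K}_{3,4}^{\prime}-u$ that splits into the two length-four (in edges) paths $P$ and $Q$ of Theorem~\ref{mytheorem2.4.2}. The previous proof used $\min\{\alpha(v_{j_0}w_{i_0j_0}),\alpha(v_{i_0}w_{i_0j_0})\}\ge s+2$ to force $\alpha(v_{j_0}w_{i_0j_0})=\alpha(v_{i_0}w_{i_0j_0})=s+2$, an impossibility since those two edges share the vertex $w_{i_0j_0}$. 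The hard part here is that the slack has shrunk: with $\max S(u,\alpha)=s+10$ instead of $s+11$, the two one-step bounds along $P$ and $Q$ now give $\alpha(v\,w_{i_1j_1})\le s+8$ and hence $\max S(w_{i_1j_1},\alpha)\le s+10$ only when the relevant starting edge is colored $s+1$; I must check that the degree-$2$ vertex $w_{11}$, if it happens to lie on $C$, does not loosen these bounds. The key observation is that a vertex of degree $2$ on an interior position of the path contributes $d-1=1$ to the growth, exactly as a degree-$3$ vertex contributes—wait, a degree-$3$ interior $w_{ij}$ also contributes $1$ because only one new color is forced at a subdivision vertex of degree $3$ when entering and leaving along the cycle; so removing the extra pendant edge at $w_{11}$ does not change the per-vertex increment along $C$ at all. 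This means the whole Case~2 computation transfers verbatim: one shows $\alpha(v_{j_0}w_{i_0j_0})=s+1$ or $\alpha(v_{i_0}w_{i_0j_0})=s+1$ each lead to $\max S(w_{i_1j_1},\alpha)\le s+10<s+10$ being tight, forcing both of these adjacent edges to take value $s+2$, the same contradiction as before.

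I would therefore conclude by remarking that the only genuine verification needed is that the deleted edge $uw_{11}$ cannot create a new place where the coloring acquires extra freedom: since $w_{11}$ loses only its edge to $u$, its two remaining edges $v_1w_{11}$ and $v_6w_{11}$ still form an interval of length $2$ at $w_{11}$, so the path-growth inequalities $\alpha(v_{i}v_{i+1})\le s+\sum_{j\le i}(d(v_j)-1)$ used in Theorem~\ref{mytheorem2.4.1} hold unchanged. Hence both cases reach a contradiction and $\widehat{K}_{3,4}^{\prime}\notin\mathfrak{N}$. I expect the main obstacle to be bookkeeping the handful of subcases of Case~2 produced by where the missing apex-edge sits relative to the extremal cycle $C$, rather than any new idea; the inequality budget, having degraded only from $+11$ to $+10$, remains comfortably sufficient because Case~1 overshot by three and Case~2 by the same structural parity argument.
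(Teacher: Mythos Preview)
Your Case~1 is fine: the chain through a common $v_i$ or $v_j$ still gives $\alpha(uw_{i_1j_1})\le s+7$ (or $s+6$), well short of $s+10$.

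The genuine gap is in Case~2. Trace path $P$ from Theorem~\ref{mytheorem2.4.2} starting with $\alpha(v_{j_0}w_{i_0j_0})=s+1$: you get $\alpha(v_{j_0}w_{i_1j_0})\le s+3$, then $\alpha(v_{i_1}w_{i_1j_0})\le s+5$, then $\alpha(v_{i_1}w_{i_1j_1})\le s+8$, hence $\max S(w_{i_1j_1},\alpha)\le s+10$. But $\alpha(uw_{i_1j_1})=s+10$, so this is \emph{tight}, not a contradiction. Your own sentence ``$\le s+10<s+10$ being tight'' is self-contradictory and signals exactly where the argument collapses. In Theorem~\ref{mytheorem2.4.2} the cycle argument overshot by precisely $1$ (reaching $s+10$ against a target of $s+11$); deleting one edge at $u$ spends that single unit of slack, and the verbatim transfer fails. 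You therefore cannot force $\alpha(v_{j_0}w_{i_0j_0})=\alpha(v_{i_0}w_{i_0j_0})=s+2$.

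The paper closes this gap with a different device. It re-inserts the missing edge $uw_{l_0m_0}$, coloring it $c+2$ where $S(w_{l_0m_0},\alpha)=\{c,c+1\}$, to obtain a coloring $\beta$ of the full $\widehat{K}_{3,4}$ in which every vertex other than $u$ still has an interval spectrum and $S(u,\beta)=[s,s+10]\cup\{c+2\}$ as a multiset. After fixing $(i_0,j_0)=(1,4)$, $(i_1,j_1)=(3,7)$ by symmetry, a detailed case analysis on $\beta(v_1w_{14})\in\{s+1,s+2\}$ (and, in the latter case, on $\min S(v_2,\beta)$) each time produces two \emph{distinct} edges $uw,uw'$ forced to receive the same color under $\beta$. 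That can happen only if one of them is the re-inserted edge, but the common color turns out to be the \emph{middle} color of both three-element intervals $S(w,\beta)$ and $S(w',\beta)$, so neither can be the degree-$2$ vertex $w_{l_0m_0}$; contradiction. This is a genuinely finer argument than the path bound you propose, and some such refinement is unavoidable once the slack drops to zero.
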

\begin{proof} Let $V(\widehat{K}_{3,4}^{\prime})= V(\widehat{K}_{3,4})$
and $E(\widehat{K}_{3,4}^{\prime})=E(\widehat{K}_{3,4})\setminus
{uw_{l_{0}m_{0}}}$.

Suppose that $\widehat{K}_{3,4}^{\prime}$ has an interval
$t$-coloring $\alpha$ for some $t\geq 11$.

Consider the vertex $u$. Let $w_{i_{0}j_{0}}$ and $w_{i_{1}j_{1}}$
be two vertices adjacent to $u$ such that
$\alpha(uw_{i_{0}j_{0}})={\min}~S(u,\alpha)=s$ and
$\alpha(uw_{i_{1}j_{1}})={\max}~S(u,\alpha)=s+10$.

Let $S(w_{l_{0}m_{0}},\alpha)=\{c,c+1\}$. Now we add the edge
$uw_{l_{0}m_{0}}$ to the graph $\widehat{K}_{3,4}^{\prime}$ and we
color it with color $c+2$. Clearly, we obtained an edge-coloring of
the graph $\widehat{K}_{3,4}$ with colors $1,\ldots,t^{\prime}
(t^{\prime}\geq t)$. Let $\beta$ be this edge-coloring. Note that
for each vertex $v\in V(\widehat{K}_{3,4})\setminus \{u\}$,
$S(v,\beta)$ is an interval of integers, and
$S(u,\beta)=[s,s+10]\cup \{c+2\}$ is a multiset in general.

Similarly as in the proof of the case 1 of Theorem
\ref{mytheorem2.4.2} it can be shown that $i_{0}\neq i_{1}$ and
$j_{0}\neq j_{1}$. Clearly, the edges $v_{i_{0}}v_{j_{0}}$ and
$v_{i_{1}}v_{j_{1}}$ are independent in $K_{3,4}$. By the symmetry
of the graph $\widehat{K}_{3,4}$, we may assume that
$(i_{0},j_{0})=(1,4)$ and $(i_{1},j_{1})=(3,7)$.

Consider the edge $v_{1}w_{14}$. Clearly, either
$\beta(v_{1}w_{14})=s+1$ or $\beta(v_{1}w_{14})=s+2$. If
$\beta(v_{1}w_{14})=s+1$, then the colors of all edges along the
cycle $C=w_{14}v_{1}w_{17}v_{7}w_{37}v_{3}w_{34}v_{4}w_{14}$ are
known. This implies that $\beta(uw_{17})=\beta(uw_{34})=s+5$. Hence,
the added color $c+2$ is $s+5$, but this is a contradiction, since
$S(w_{17},\beta)=S(w_{34},\beta)=[s+4,s+6]$ and in both cases $s+5$
is a middle color of the sets $S(w_{17},\beta)$ and
$S(w_{34},\beta)$.

Now assume that $\beta(v_{1}w_{14})=s+2$. In this case the colors of
all edges along the cycle
$C=w_{14}v_{1}w_{17}v_{7}w_{37}v_{3}w_{34}v_{4}w_{14}$ are also
known. By the symmetry of the graph $\widehat{K}_{3,4}$, we may
assume that $\beta(v_{1}w_{15})=s+3$ and $\beta(v_{1}w_{16})=s+4$.
Since $\beta(v_{7}w_{27})=s+8$, we have $\min S(v_{2},\beta)\geq
s+3$. On the other hand, since $\beta(v_{4}w_{24})=s+2$, we have
$\min S(v_{2},\beta)\leq s+4$. We consider two cases.

Case 1: $\min S(v_{2},\beta)= s+3$.

In this case $\beta(v_{2}w_{26})\leq s+5$ and thus $\beta(uw_{36})=
s+9$. This implies that $\beta(v_{3}w_{36})=s+7$ and
$\beta(v_{6}w_{36})=s+8$. Since $\beta(v_{1}w_{16})=s+4$, we have
$\beta(v_{6}w_{16})=s+6$ and $\beta(v_{6}w_{26})=s+7$. Also, since
$\beta(v_{2}w_{26})\leq s+5$, we have $\beta(uw_{26})= s+6$. On the
other hand, $\beta(uw_{17})= s+6$, but this is a contradiction,
since $S(w_{17},\beta)=S(w_{26},\beta)=[s+5,s+7]$ and in both cases
$s+6$ is a middle color of the sets $S(w_{17},\beta)$ and
$S(w_{26},\beta)$.

Case 2: $\min S(v_{2},\beta)= s+4$.

In this case $\beta(uw_{24})=s+3$ and $\beta(v_{2}w_{24})= s+4$.
This implies that $\beta(v_{2}w_{25})\geq s+5$ and thus
$\beta(uw_{15})= s+1$. Since $\beta(v_{1}w_{15})=s+3$, we have
$\beta(v_{5}w_{15})=s+2$. Also, since $\beta(v_{3}w_{35})\geq s+6$,
we have $\beta(v_{5}w_{35})= s+4$ and $\beta(v_{5}w_{25})= s+3$.
This implies that $\beta(uw_{25})= s+4$. On the other hand,
$\beta(uw_{34})= s+4$, but this is a contradiction, since
$S(w_{34},\beta)=S(w_{25},\beta)=[s+3,s+5]$ and in both cases $s+4$
is a middle color of the sets $S(w_{34},\beta)$ and
$S(w_{25},\beta)$. ~$\square$
\end{proof}

\bigskip

\section{Interval non-edge-colorable bipartite multigraphs}\

In this section we consider bipartite multigraphs, and first we show
that any bipartite multigraph $G$ with at most four vertices is
interval colorable.

\begin{theorem}
\label{mytheorem3.1} If $G$ is a connected bipartite multigraph with
$\vert V(G)\vert\leq 4$, then $G\in \mathfrak{N}$.
\end{theorem}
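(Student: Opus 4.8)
The plan is to classify the connected bipartite multigraphs $G$ with $2\le |V(G)|\le 4$ according to the sizes of the two parts of a bipartition $(U,V)$, and to exhibit an explicit interval coloring in each case. Since $G$ is connected with at least two vertices, both parts are nonempty, so the only possibilities are $\min\{|U|,|V|\}=1$ (a ``multistar''), or $|U|=|V|=2$ (a multigraph whose underlying simple graph is contained in $C_4$).

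First I would dispose of the multistar case $\min\{|U|,|V|\}=1$. Here one vertex $u$ is joined to every other vertex $v_1,\ldots,v_k$ ($k\le 3$), and these are the only edges. Writing $m_i=\mu(uv_i)$, I would color the bundle joining $u$ to $v_i$ with the consecutive block of integers $\left[1+\sum_{j<i}m_j,\ \sum_{j\le i}m_j\right]$. Then $S(u,\alpha)$ is the full interval $[1,\sum_i m_i]$, each $S(v_i,\alpha)$ is one of the blocks, and the coloring is proper; hence $G\in\mathfrak{N}$.

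The main case, and the only real obstacle, is $|U|=|V|=2$. Write $U=\{u_1,u_2\}$, $V=\{v_1,v_2\}$ and set $a=\mu(u_1v_1)$, $b=\mu(u_1v_2)$, $c=\mu(u_2v_1)$, $d=\mu(u_2v_2)$, so that $d(u_1)=a+b$, $d(v_1)=a+c$, $d(u_2)=c+d$, $d(v_2)=b+d$. The difficulty is that one cannot in general make all four bundles into single intervals simultaneously (closing up the $4$-cycle forces a wrap-around obstruction), so opposite bundles must be allowed to share colors --- which is harmless, since $u_1v_1$ and $u_2v_2$ (resp.\ $u_1v_2$ and $u_2v_1$) are non-adjacent. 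Relabeling the vertices (the group generated by the swaps $u_1\leftrightarrow u_2$ and $v_1\leftrightarrow v_2$ acts transitively on the four bundles), I would assume $a=\max\{a,b,c,d\}$, so in particular $a\ge d$. Then I would color the four bundles by the intervals $[1,a]$ on $u_1v_1$, $[a+1,a+b]$ on $u_1v_2$, $[a+1,a+c]$ on $u_2v_1$, and $[a-d+1,a]$ on $u_2v_2$.

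Finally I would verify that this is an interval coloring. The inequality $a\ge d$ guarantees all colors are positive, and a short check shows that $S(u_1,\alpha)=[1,a+b]$, $S(v_1,\alpha)=[1,a+c]$, $S(u_2,\alpha)=[a-d+1,a+c]$, and $S(v_2,\alpha)=[a-d+1,a+b]$ are all intervals, that the two bundles at each vertex receive disjoint color sets (so $\alpha$ is proper), and that the colors used fill the interval $[1,a+\max\{b,c\}]$. This settles the full $4$-cycle; the degenerate subcases in which the underlying simple graph of $G$ is a path $P_4$ occur exactly when one bundle is empty (say $d=0$), and the same formulas apply verbatim, since then $a\ge d=0$ holds automatically. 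I expect the bookkeeping of this last verification, together with the initial relabeling that secures $a\ge d$, to be the only delicate points; everything else is routine.
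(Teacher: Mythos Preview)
Your proposal is correct and is essentially the same argument as the paper's: after disposing of the trivial small cases, both proofs normalize so that one bundle of the $4$-cycle has maximum multiplicity, color that bundle by an initial interval, push the two adjacent bundles upward from its top, and tuck the opposite bundle just below its top. Up to relabeling (your $a=\max$ versus the paper's $d=\max$), the four color intervals you write down coincide with the paper's, and your explicit treatment of the multistar and $P_4$ degenerations just spells out what the paper dismisses as ``trivial.''
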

\begin{proof}
The cases $\vert V(G)\vert\leq 3$ are trivial. Assume that $\vert
V(G)\vert= 4$. If the underlying graph of $G$ is a tree, then the
proof is trivial, too. Now let $V(G)=\{u,v,w,z\}$ and
$E(G)=E(uv)\cup E(vw)\cup E(wz)\cup E(uz)$ with $\mu(uv)=a$,
$\mu(vw)=b$, $\mu(wz)=c$, $\mu(uz)=d$. Without loss of generality we
may assume that ${\max} \{a,b,c,d\}=d$. We color edges from $E(uv)$
with colors $d+1,\ldots,d+a$, edges from $E(vw)$ with colors
$d-b+1,\ldots,d$, edges from $E(wz)$ with colors $d+1,\ldots,d+c$,
and edges from $E(uz)$ with colors $1,\ldots,d$. If $a<c$, then the
obtained coloring is an interval $(d+c)$-coloring of the multigraph
$G$; otherwise the obtained coloring is an interval $(d+a)$-coloring
of the multigraph $G$. ~$\square$
\end{proof}

\begin{figure}[h]
\begin{center}
\includegraphics[width=15pc]{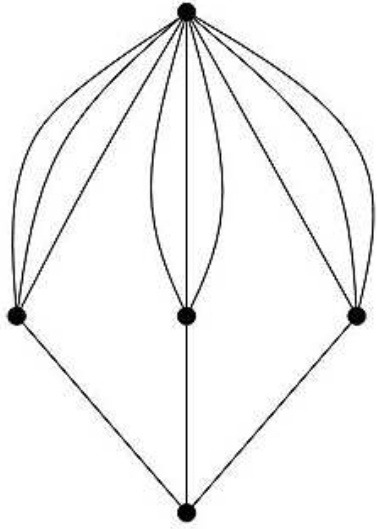}\\
\caption{The bipartite multigraph $G$.}\label{fig7}
\end{center}
\end{figure}

Note that the bipartite multigraph $G$ with $\vert V(G)\vert=5$ and
$\Delta(G)=9$ shown in Fig. \ref{fig7} has no interval coloring. Now
we show a more general result.

Let us define parachute multigraphs $Par(r_{1},\ldots,r_{n})$
($r_{1}\geq\cdots\geq r_{n}\geq 1$) as follows:
\begin{center}
$V(Par(r_{1},\ldots,r_{n}))=\{u,w,v_{1},\ldots,v_{n}\}$,
\end{center}
\begin{center}
$E(Par(r_{1},\ldots,r_{n}))=\{uv_{i}:\mu(uv_{i})=r_{i},1\leq i\leq
n\}\cup \{v_{j}w:1\leq j\leq n\}$.
\end{center}

Clearly, $Par(r_{1},\ldots,r_{n})$ is a connected bipartite
multigraph with $\vert V(Par(r_{1},\ldots,r_{n}))\vert=n+2$, $\Delta
(Par(r_{1},\ldots,r_{n}))=d(u)=\underset{i=1}{\overset{n}{\sum
}}r_{i}$, and $d(w)=n$, $d(v_{i})=r_{i}+1$, $i=1,2,\ldots,n$.

\begin{theorem}
\label{mytheorem3.2} If $\underset{i=3}{\overset{n}{\sum }}r_{i}\geq
n+1$ ($n\geq 3$), then $Par(r_{1},\ldots,r_{n})\notin \mathfrak{N}$.
\end{theorem}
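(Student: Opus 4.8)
The plan is to mimic the argument structure used throughout this subsection: assume an interval coloring of the multigraph exists, look at the apex vertex $u$ of maximum degree, and trace a short path back through the middle vertex $w$ from the color-minimal edge at $u$ to the color-maximal edge at $u$. The point is that $u$ forces a color spread of $\sum_{i=1}^{n}r_i - 1$, but any path from a neighbor of $u$ back to another neighbor of $u$ that passes through $w$ (and at most two of the $v_i$) can only accumulate a bounded amount of color increase, governed by the small degrees $d(w)=n$ and $d(v_i)=r_i+1$. The hypothesis $\sum_{i=3}^{n} r_i \ge n+1$ should be exactly what is needed to make the two bounds incompatible.

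\begin{proof}
Suppose, to the contrary, that $Par(r_{1},\ldots,r_{n})$ has an interval $q$-coloring $\alpha$ for some $q\geq \underset{i=1}{\overset{n}{\sum }}r_{i}$.

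Consider the vertex $u$. Let $v_{p}$ and $v_{p'}$ be neighbors of $u$ realizing $\alpha(uv_{p})={\min}~S(u,\alpha)=\sigma$ and some edge $uv_{p'}$ with $\alpha(uv_{p'})={\max}~S(u,\alpha)=\sigma+\underset{i=1}{\overset{n}{\sum }}r_{i}-1$ (here $v_p$ and $v_{p'}$ may coincide or differ). The unique short way to travel from the fiber over $v_{p}$ to the fiber over $v_{p'}$ without using $u$ is through $w$: the path
\begin{center}
$P(v_{p},v_{p'})=(v_{p},v_{p}w,w,wv_{p'},v_{p'})$.
\end{center}
Since $d(v_{p})=r_{p}+1$ and the edge $uv_{p}$ attains the minimum color $\sigma$ at $v_{p}$, we have $\alpha(v_{p}w)\leq \sigma + d(v_{p})-1 = \sigma+r_{p}$. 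Then, since $d(w)=n$, the color at the next edge satisfies $\alpha(wv_{p'})\leq \sigma+r_{p}+d(w)-1=\sigma+r_{p}+n-1$. Finally, since $d(v_{p'})=r_{p'}+1$, the edge $uv_{p'}$ at $v_{p'}$ obeys
\begin{center}
$\sigma+\underset{i=1}{\overset{n}{\sum }}r_{i}-1=\max S(u,\alpha)=\alpha(uv_{p'})\leq \sigma+r_{p}+n-1+d(v_{p'})-1=\sigma+r_{p}+r_{p'}+n-1$.
\end{center}
Hence $\underset{i=1}{\overset{n}{\sum }}r_{i}\leq r_{p}+r_{p'}+n$, that is, the sum of the $r_i$ over all indices other than $p$ and $p'$ is at most $n$. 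Since $r_{1}\geq\cdots\geq r_{n}$, the left-hand side is minimized by discarding the two largest terms, so $\underset{i=3}{\overset{n}{\sum }}r_{i}\leq \underset{i\neq p,p'}{\sum }r_{i}\leq n$, contradicting the hypothesis $\underset{i=3}{\overset{n}{\sum }}r_{i}\geq n+1$. ~$\square$
\end{proof}

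**The main subtlety** I expect is the bookkeeping in the last step: the two edges at $u$ that realize the extreme colors hang over vertices $v_p, v_{p'}$ that need not be $v_1, v_2$, so the path picks up $r_p + r_{p'}$ rather than $r_1 + r_2$. The clean resolution is to note that whatever the indices $p,p'$ are, the remaining sum $\sum_{i \ne p,p'} r_i$ is at least $\sum_{i=3}^{n} r_i$ because $r_1,r_2$ are the two largest terms; this is where the sorted hypothesis $r_1 \ge \cdots \ge r_n$ is used. A second point to verify is the degenerate case $v_p = v_{p'}$, where the min and max colors at $u$ lie on multi-edges to the same $v_p$: there $S(v_p,\alpha) \supseteq \{\sigma, \sigma + \sum r_i - 1\}$ together with $v_p w$ forces $d(v_p) = r_p + 1 \ge \sum r_i$, i.e. $\sum_{i\ne p} r_i \le 1$, which again contradicts $\sum_{i=3}^n r_i \ge n+1$ for $n \ge 3$, so this case causes no trouble.
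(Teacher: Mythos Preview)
Your proof is correct and follows essentially the same approach as the paper's: assume an interval coloring, pick the neighbors of $u$ realizing the extreme colors, trace the two-edge path through $w$, and use the ordering $r_1\ge\cdots\ge r_n$ to pass from $\sum_{i\ne p,p'}r_i\le n$ to $\sum_{i=3}^n r_i\le n$. The only presentational difference is that the paper explicitly separates the degenerate case $v_p=v_{p'}$ \emph{inside} the proof (obtaining $\sum_i r_i-1\le r_{i_0}$ directly from $|S(v_{i_0},\alpha)|=r_{i_0}+1$), whereas you relegate it to a post-proof remark; since your main path argument does not literally apply when $v_p=v_{p'}$ (the ``path'' $v_p w v_{p'}$ collapses), that case should be stated and dispatched within the proof body rather than as an afterthought.
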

\begin{proof} Suppose, to the contrary, that the multigraph
$Par(r_{1},\ldots,r_{n})$ has an interval $t$-coloring $\alpha$ for
some $t\geq \underset{i=1}{\overset{n}{\sum }}r_{i}$.

Consider the vertex $u$. Let $v_{i_{0}}$ and $v_{i_{1}}$ be two
vertices adjacent to $u$ such that
$\alpha(uv_{i_{0}})={\min}~S(u,\alpha)=s$ and
$\alpha(uv_{i_{1}})={\max}~S(u,\alpha)=s+\underset{i=1}{\overset{n}{\sum
}}r_{i}-1$.

If $i_{0}=i_{1}$, then $\alpha(uv_{i_{0}})={\max
}~S(u,\alpha)=s+\underset{i=1}{\overset{n}{\sum }}r_{i}-1\leq
s+d(v_{i_{0}})-1=s+r_{i_{0}}$ and thus
$\underset{i=1}{\overset{n}{\sum }}r_{i}-1\leq r_{i_{0}}$, which is
impossible.

If $i_{0}\neq i_{1}$, then, by the construction of the multigraph
$Par(r_{1},\ldots,r_{n})$, we have
\begin{center}
$\alpha(v_{i_{0}}w)\leq s+d(v_{i_{0}})-1=s+r_{i_{0}}$ and thus
\end{center}
\begin{center}
$\alpha(v_{i_{1}}w)\leq s+r_{i_{0}}+d(w)-1=s+r_{i_{0}}+n-1$.
\end{center}

This implies that
\begin{center}
$s+\underset{i=1}{\overset{n}{\sum
}}r_{i}-1=\alpha(uv_{i_{1}})={\max }~S(u,\alpha)\leq
s+r_{i_{0}}+n-1+d(v_{i_{1}})-1=s+r_{i_{0}}+r_{i_{1}}+n-1$.
\end{center}

Hence
\begin{center}
$\underset{i=3}{\overset{n}{\sum }}r_{i}\leq
\underset{i=1}{\overset{n}{\sum }}r_{i} - (r_{i_{0}}+r_{i_{1}})\leq
n$,
\end{center}
which is a contradiction.
 ~$\square$
\end{proof}

By Fig. \ref{fig7} and Theorem \ref{mytheorem3.2}, we have

\begin{corollary}
\label{mycorollary3.1} For any positive integer $\Delta \geq 9$,
there is a connected bipartite multigraph $G$ with $G\notin
\mathfrak{N}$ and $\Delta(G)=\Delta$.
\end{corollary}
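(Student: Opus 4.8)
The plan is to realize every integer $\Delta \ge 10$ as the maximum degree of a suitable parachute multigraph meeting the hypothesis of Theorem~\ref{mytheorem3.2}, and to dispose of the single remaining value $\Delta = 9$ by exhibiting the multigraph of Fig.~\ref{fig7}. Recall that $Par(r_{1},\ldots,r_{n})$ has $\Delta=\sum_{i=1}^{n}r_{i}$, so I am free to choose $n$ and the multiplicities $r_{i}$ subject only to $r_{1}\ge\cdots\ge r_{n}\ge 1$ and to the non-colorability condition $\sum_{i=3}^{n}r_{i}\ge n+1$.

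First I would treat $\Delta \ge 10$ by parity. For even $\Delta=2m$ with $m\ge 5$, take $G=Par(\underbrace{2,\ldots,2}_{m})$; then $\Delta(G)=2m$, while $\sum_{i=3}^{m}r_{i}=2(m-2)\ge m+1$ precisely because $m\ge 5$, so Theorem~\ref{mytheorem3.2} yields $G\notin\mathfrak{N}$. For odd $\Delta=2m+1$ with $m\ge 5$, take $G=Par(3,\underbrace{2,\ldots,2}_{m-1})$; then $\Delta(G)=3+2(m-1)=2m+1$, the ordering $r_{1}\ge\cdots\ge r_{m}$ holds, and again $\sum_{i=3}^{m}r_{i}=2(m-2)\ge m+1$ for $m\ge 5$. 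As $2m$ ranges over the even integers $\ge 10$ and $2m+1$ over the odd integers $\ge 11$, these two families together cover every integer $\Delta\ge 10$.

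Finally, for $\Delta=9$ I would simply invoke the connected bipartite multigraph $G$ of Fig.~\ref{fig7}, which has $\vert V(G)\vert=5$ and $\Delta(G)=9$ and is already known not to be interval colorable. Combining the three cases, for every integer $\Delta\ge 9$ there is a connected bipartite multigraph with maximum degree $\Delta$ and no interval coloring, which is exactly the assertion of the corollary.

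I do not expect a genuine obstacle here, since the corollary is essentially a bookkeeping exercise layered on top of Theorem~\ref{mytheorem3.2}; the one point that deserves care is why $\Delta=9$ must be supplied by hand rather than by a parachute. If the hypothesis $\sum_{i=3}^{n}r_{i}\ge n+1$ holds, then $r_{3}\ge 2$ (otherwise $r_{3}=\cdots=r_{n}=1$ and $\sum_{i=3}^{n}r_{i}=n-2<n+1$), whence $r_{1}+r_{2}\ge 2r_{3}\ge 4$ and the total $\sum_{i=1}^{n}r_{i}\ge 4+(n+1)\ge 10$ for $n\ge 5$; checking $n=3,4$ directly gives totals $\ge 12$ and $\ge 11$. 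Thus no admissible parachute attains $\Delta=9$, and the minimal admissible total $10$ is realized by $Par(2,2,2,2,2)$. This is precisely the reason Fig.~\ref{fig7} is needed to close the bottom case.
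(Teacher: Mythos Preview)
Your proposal is correct and follows exactly the route the paper indicates: the paper simply writes ``By Fig.~\ref{fig7} and Theorem~\ref{mytheorem3.2}, we have'' and leaves the reader to fill in the parachute constructions, which you have done explicitly (and your extra paragraph explaining why no parachute satisfying the hypothesis of Theorem~\ref{mytheorem3.2} can reach $\Delta=9$ is a nice bonus that the paper omits).
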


On the other hand, now we prove that all subcubic bipartite
multigraphs have an interval coloring.

\begin{theorem}
\label{mytheorem3.3} If $G$ is a bipartite multigraph with
$\Delta(G)\leq 3$, then $G\in \mathfrak{N}$ and $w(G)\leq 4$.
\end{theorem}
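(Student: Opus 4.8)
The plan is to reduce to the case of a connected multigraph with $\Delta(G)=3$ and then produce an explicit interval $4$-colouring. If $G$ is disconnected I would colour each component separately and shift every component's colouring to start at $1$; since each component then uses an interval of colours beginning with $1$, the union is again an interval colouring using at most as many colours. If $\Delta(G)\le 2$ then $G$ is a disjoint union of paths and even cycles (a pair of parallel edges counts as a $2$-cycle), and colouring the edges alternately $1,2$ gives an interval $2$-colouring. So assume $G$ is connected with $\Delta(G)=3$.

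Since $G$ is bipartite, K\"onig's theorem gives $\chi'(G)=3$, so $E(G)$ splits into three matchings $M_1,M_2,M_3$. A vertex of degree $3$ meets every matching, so each $M_i$ saturates all degree-$3$ vertices. Fix $M:=M_1$ and set $A:=E(G)\setminus M$. Every degree-$3$ vertex loses exactly one edge to $M$, hence has degree $2$ in $A$, while all other vertices have degree $\le 2$ in $A$; thus $A$ has maximum degree $\le 2$ and is a disjoint union of paths and even cycles. (Parallel edges cause no trouble: $M$ is a matching, and a pair of parallel edges in $A$ is an even $2$-cycle.)

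The idea is to reserve the two inner colours $2,3$ for $A$ and the two outer colours $1,4$ for the matching $M$. I would $2$-colour each path and even cycle of $A$ by alternating $2,3$. Then every degree-$3$ vertex is an interior vertex of $A$ and sees $\{2,3\}$; adding its unique $M$-edge coloured $1$ or $4$ makes its palette $\{1,2,3\}$ or $\{2,3,4\}$, an interval. Likewise a degree-$2$ vertex with both edges in $A$ already sees $\{2,3\}$, and a degree-$1$ vertex is trivially fine. The only vertices that can fail are the degree-$2$ vertices $v$ with one edge in $A$ and one in $M$: writing $c\in\{2,3\}$ for the colour of the $A$-edge at $v$, the $M$-edge must receive a colour in $\{1,3\}$ if $c=2$ and in $\{2,4\}$ if $c=3$, so that the two colours at $v$ are consecutive. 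For an $M$-edge whose endpoints are otherwise flexible (of degree $3$ or degree $1$) such a colour always exists; a short check shows the only obstruction is an $M$-edge joining two such degree-$2$ vertices whose $A$-edges received \emph{different} inner colours, since then the two required colour-sets are disjoint.

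Hence the whole proof reduces to choosing the alternation of each $A$-component so that, for every $M$-edge between two of these degree-$2$ vertices, its two endpoints get equal inner colours. This is the main obstacle, and I would phrase it as a $2$-colouring problem (equalities imposed by such $M$-edges, equalities-or-inequalities imposed by the parity of each $A$-path) and solve it by a parity argument: a cycle of constraints corresponds to a closed alternating walk in $A\cup M$, i.e.\ a cycle of $G$, which is \emph{even} because $G$ is bipartite; tracking the parities shows every constraint cycle is consistent, so a valid choice of alternations exists. With the alternations fixed I would finally colour each $M$-edge by any colour in the (now nonempty) intersection of the sets allowed by its two endpoints, and verify that every vertex then sees an interval. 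This yields an interval colouring with colours in $\{1,2,3,4\}$, proving $G\in\mathfrak N$ and $w(G)\le 4$.
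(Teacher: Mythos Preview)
Your argument is correct, and the parity check at the end goes through exactly as you indicate: a cycle in your constraint graph alternates $M$-edges (each an ``equality'') with $A$-paths (an ``equality'' when of odd length, an ``inequality'' when even), and the corresponding closed walk in $G$ is a genuine simple cycle of length $\big(\sum_i k_i\big)+m$; bipartiteness forces this sum to be even, whence the number of even $k_i$ is even and the cycle is consistent. One small point worth making explicit in your write-up is that the set of colours actually used always contains $\{2,3\}$ (any degree-$3$ vertex is internal in $A$) and is therefore one of $\{2,3\}$, $\{1,2,3\}$, $\{2,3,4\}$, $\{1,2,3,4\}$, so after a shift you really do obtain an interval $t$-colouring with $t\le 4$.

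Your route is genuinely different from the paper's. The paper argues by induction on $\vert E(G)\vert$: if $G$ is simple it invokes Hansen's theorem (Theorem~\ref{mytheorem2}), and otherwise it locates a pair of parallel edges and reduces to a smaller multigraph by deletion or by a local replacement (delete $u,v$ and their incident edges, add a single edge $xy$), then restores the removed piece with colours adjacent to the inherited one. Thus the paper's proof is short but not self-contained: it rests on the simple-graph case. Your proof, by contrast, is a direct construction from K\"onig's theorem and never appeals to Hansen; the price is the parity/consistency argument, but that argument is clean and makes transparent exactly where bipartiteness is used. Either approach is fine; yours has the added benefit of giving an explicit colouring procedure in one pass.
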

\begin{proof} First note that if $\Delta(G)\leq 2$, then $G\in \mathfrak{N}$ and $w(G)\leq 2$.

Now suppose that $\Delta(G)=3$. For the proof, it suffices to show
that $G$ has either an interval $3$-coloring or an interval
$4$-coloring.

We show it by induction on $\vert E(G)\vert$. The statement is
trivial for the case $\vert E(G)\vert\leq 4$. Assume that $\vert
E(G)\vert\geq 5$, and the statement is true for all multigraphs
$G^{\prime}$ with $\Delta(G^{\prime})=3$ and $\vert
E(G^{\prime})\vert<\vert E(G)\vert$.

Let us consider a multigraph $G$. Clearly, $G$ is connected. If $G$
has no multiple edges, then the statement follows from Theorem
\ref{mytheorem2}. Now suppose that $G$ has multiple edges.

Let $uv\in E(G)$ and $\mu(uv)\geq 2$. We consider two cases.

Case 1: $\mu(uv)=d_{G}(v)=2$ and $d_{G}(u)=\Delta(G)=3$.

Clearly, in this case there is an edge $uw$, which is a bridge in
$G$. Let us consider a multigraph $G^{\prime}=G-E(uv)$, where
$E(uv)=\{e_{1},e_{2}\}$. By induction hypothesis, $G^{\prime}$ has
either an interval $3$-coloring $\alpha$ or an interval $4$-coloring
$\alpha$.

Subcase 1.1: $\alpha (uw)\leq 2$.

We color the edge $e_{i}$ with color $\alpha (uw)+i$, $i=1,2$. It is
not difficult to see that the obtained coloring is an interval
$3$-coloring or an interval $4$-coloring of the multigraph $G$.

Subcase 1.2: $\alpha (uw)\geq 3$.

We color the edge $e_{i}$ with color $\alpha (uw)-i$, $i=1,2$. It is
not difficult to see that the obtained coloring is an interval
$3$-coloring or an interval $4$-coloring of the multigraph $G$.

Case 2: $\mu(uv)=2$ and $d_{G}(u)=d_{G}(v)=\Delta(G)=3$.

Clearly, in this case there are vertices $x,y$ ($x\neq y$) in $G$
such that $ux\in E(G)$ and $vy\in E(G)$. Let us consider a
multigraph $G^{\prime}=(G-E(uv)-ux-vy)+xy$, where
$E(uv)=\{e_{1},e_{2}\}$. By induction hypothesis, $G^{\prime}$ has
either an interval $3$-coloring $\alpha$ or an interval $4$-coloring
$\alpha$.

Subcase 2.1: $\alpha (xy)\leq 2$.

We delete the edge $xy$ and color the edges $ux$ and $vy$ with color
$\alpha (xy)$ and the edge $e_{i}$ with color $\alpha (xy)+i$,
$i=1,2$. It is not difficult to see that the obtained coloring is an
interval $3$-coloring or an interval $4$-coloring of the multigraph
$G$.

Subcase 2.2: $\alpha (xy)\geq 3$.

We delete the edge $xy$ and color the edges $ux$ and $vy$ with color
$\alpha (xy)$, and the edge $e_{i}$ with color $\alpha (xy)-i$,
$i=1,2$. It is not difficult to see that the obtained coloring is an
interval $3$-coloring or an interval $4$-coloring of the multigraph
$G$.
 ~$\square$
\end{proof}

\bigskip

\section{Problems}\

Finally, we restate the problem posed by Jensen and Toft and formulate a similar problem
for multigraphs. The problems are following:

\begin{problem}
Is there a bipartite graph G with $4\leq \Delta(G)\leq 10$ and
$G\notin \mathfrak{N}$?
\end{problem}

\begin{problem}
Is there a bipartite multigraph G with $4\leq \Delta(G)\leq 8$ and
$G\notin \mathfrak{N}$?
\end{problem}

Since all bipartite graphs of order at most $14$ are interval
colorable \cite{b5} and the bipartite graph $\widehat{K}_{2,2,2}$
with $\vert V(\widehat{K}_{2,2,2})\vert =19$ is not interval
colorable, we would like to suggest the following

\begin{problem}
Is there a bipartite graph G with $15\leq \vert V(G)\vert \leq 18$
and $G\notin \mathfrak{N}$?
\end{problem}

\begin{acknowledgement}
We would like to thank Rafayel R. Kamalian for his attention to this
work. We also would like to thank our referees for many useful
comments and suggestions which helped us to improve the presentation
of the article.
\end{acknowledgement}


\begin{thebibliography}{99}

\bibitem{b1} A.S. Asratian and R.R. Kamalian, Interval colorings of edges of a
multigraph, Appl. Math. 5 (1987), 25-34 (in Russian).

\bibitem{b2} A.S. Asratian and R.R. Kamalian, Investigation on interval
edge-colorings of graphs, J. Combin. Theory Ser. B 62 (1994), 34-43.

\bibitem{b3} A.S. Asratian, T.M.J. Denley and R. Haggkvist, Bipartite Graphs and
their Applications, Cambridge University Press, Cambridge, 1998.

\bibitem{b4} A.S. Asratian, C.J. Casselgren, J. Vandenbussche and D.B. West,
Proper path-factors and interval edge-coloring of $\left(3,4\right)
$-biregular bigraphs, J. Graph Theory 61 (2009), 88-97.

\bibitem{b5} K. Giaro, Compact task scheduling on dedicated processors with no
waiting periods, PhD thesis, Technical University of Gdansk, EIT faculty, Gdansk, 1999 (in Polish).

\bibitem{b6} K. Giaro, The complexity of consecutive $\Delta $-coloring of
bipartite graphs: $4$ is easy, $5$ is hard, Ars Combin. 47 (1997),
287-298.

\bibitem{b7} K. Giaro and M. Kubale, Consecutive edge-colorings of complete and
incomplete Cartesian products of graphs, Congr, Numer. 128 (1997),
143-149.

\bibitem{b8} K. Giaro, M. Kubale and M. Malafiejski, On the deficiency of
bipartite graphs, Discrete Appl. Math. 94 (1999), 193-203.

\bibitem{b9} K. Giaro and M. Kubale, Compact scheduling of zero-one time
operations in multi-stage systems, Discrete Appl. Math. 145 (2004),
95-103.

\bibitem{b10} H.M. Hansen, Scheduling with minimum waiting periods, Master's
Thesis, Odense University, Odense, Denmark, 1992 (in Danish).

\bibitem{b11} D. Hanson, C.O.M. Loten and B. Toft, On interval colorings of
bi-regular bipartite graphs, Ars Combin. 50 (1998), 23-32.

\bibitem{b12} T.R. Jensen and B. Toft, Graph Coloring Problems, Wiley
Interscience Series in Discrete Mathematics and Optimization, 1995.

\bibitem{b13} R.R. Kamalian, Interval colorings of complete bipartite graphs
and trees, preprint, Comp. Cen. of Acad. Sci. of Armenian SSR,
Erevan, 1989 (in Russian).

\bibitem{b14} R.R. Kamalian, Interval edge-colorings of graphs, Doctoral
Thesis, Novosibirsk, 1990.

\bibitem{b15} R.R. Kamalian and A.N. Mirumian, Interval edge-colorings of
bipartite graphs of some class, Dokl. NAN RA, 97 (1997), 3-5 (in
Russian).

\bibitem{b16} A.V. Kostochka, unpublished manuscript, 1995.

\bibitem{b17} A.N. Mirumyan, personal communication, 2005.

\bibitem{b18} P.A. Petrosyan, Interval edge-colorings of complete graphs and
$n$-dimensional cubes, Discrete Math. 310 (2010), 1580-1587.

\bibitem{b19} A.V. Pyatkin, Interval coloring of $\left(3,4\right)
$-biregular bipartite graphs having large cubic subgraphs, J. Graph
Theory 47 (2004), 122-128.

\bibitem{b20} S.V. Sevast'janov, Interval colorability of the edges of a
bipartite graph, Metody Diskret. Analiza 50 (1990), 61-72 (in
Russian).

\bibitem{b21} C.E. Shannon, A theorem on colouring the lines of a network,
J. Math. Phys. 28 (1949), 148-151.

\bibitem{b22} V.G. Vizing, The chromatic class of a multigraph, Kibernetika 3
(1965), 29-39 (in Russian).

\bibitem{b23} D.B. West, Introduction to Graph Theory, Prentice-Hall, New
Jersey, 1996.

\bibitem{b24} F. Yang and X. Li, Interval coloring of $\left(3,4\right)
$-biregular bigraphs having two $\left(2,3\right)$-biregular
bipartite subgraphs, Appl. Math. Let. 24 (2011), 1574-1577.

\end{thebibliography}
\end{document}